\theoremstyle{definition}
\newtheorem{Theorem}{Theorem}[section]
\newtheorem{Lemma}[Theorem]{Lemma}
\newtheorem{Remark}[Theorem]{Remark}  
\newtheorem{Assumption}[Theorem]{Assumption} 
\newtheorem{Example}[Theorem]{Example}
\DeclareMathOperator*{\argmin}{\mathrm{argmin}}
\let\OLDthebibliography\thebibliography
\renewcommand\thebibliography[1]{
  \OLDthebibliography{#1}
  \setlength{\parskip}{1pt}
  \setlength{\itemsep}{0pt plus 0.0ex}}
\title{New closed-form estimators for discrete distributions}
\author{Adrian Fischer\footnote{Adrian Fischer, University of Oxford, UK. E-mail: adrian.fischer@stats.ox.ac.uk}}
\begin{document}
\maketitle

\begin{abstract}
    We revisit the problem of parameter estimation for discrete probability distributions with values in $\mathbb{Z}^d$. To this end, we adapt a technique called Stein's Method of Moments to discrete distributions which often gives closed-form estimators when standard methods such as maximum likelihood estimation (MLE) require numerical optimization. These new estimators exhibit good performance in small-sample settings which is demonstrated by means of a comparison to the MLE through simulation studies. We pay special attention to truncated distributions and show that the asymptotic behavior of our estimators is not affected by an unknown (rectangular) truncation domain.
\end{abstract}

\noindent{{\bf{Keywords:}}} discrete distributions; point estimation; Stein's method; truncated distributions

\section{Introduction}
Normalizing constants for discrete probability distributions often involve complicated special functions or are not available in closed-form at all. Parametric inference for these models encompasses several difficulties. If the normalizing constant is not available in closed-form, numerical approximations are necessary and can be computationally challenging. In addition, standard methods such as maximum likelihood estimation (MLE) require iterative methods and might be unfeasible, especially in a high-dimensional setting. To bypass these computational issues, \cite{betsch2022characterizations} 
proposed estimators based on a representation of the probability mass function in terms of an expectation which is independent of the normalizing constant. The authors then replace the expectation by the sample mean and measure the distance between the latter empirical average and the sample probability mass function. In \cite{lyu2009interpretation}, the score matching approach \cite{hyvarinen2007some,hyvarinen2005estimation} was generalized to discrete data by substituting the gradient in Euclidean space with linear operators from a more general class. However, their estimates still involve the normalizing constant in the univariate case and contain infinite sums if the domain of the distribution is not of finite cardinality. More recently, \cite{xu2025generalized} refined the discrete score matching approach by using a finite difference operator. \par

In this note, we extend the concept of Stein's Method of Moments as introduced in \cite{ebner2025stein} to univariate and multivariate distributions whose domain is a (rectangular) subset of $\mathbb{Z}^d$. In Stein's method one uses differential operators to obtain distributional approximations by solving a corresponding differential equation. It was originally introduced by Charles Stein \cite{stein1972bound} in 1972 for the normal distribution to derive a simple proof of the central limit theorem. Since then it has been extended to many more probability distribution, including distributions with a discrete support, we refer to the introduction of \cite{betsch2022characterizations} for some references. We note that Stein's method has been applied many times in statistics and machine learning, we refer to the survey \cite{anastasiou2023stein} and the references therein. The foundation of Stein's Method of Moments are so-called \textit{Stein operators} which allow to characterize probability distribution in the following sense: Let $X$ be a random variable and $\mathbb{P}_{\theta}$ be a probability distribution, where $\theta \in \Theta \subset \mathbb{R}^q$ is the $q$-dimensional parameter of interest. Then $\mathcal{A}_{\theta}$ is a Stein operator for $\mathbb{P}_{\theta}$ if
\begin{align*}
    \mathbb{E}[\mathcal{A}_{\theta} f(X)]=0
\end{align*}
for all functions $f$ which belong to a certain class of functions $\mathscr{F}$ if and only if $X \sim \mathbb{P}_{\theta}$. The equation from the last display is often referred to as the \textit{Stein identity}. To estimate $\theta$ from a sample $X_1,\ldots,X_n \sim \mathbb{P}_{\theta}$ we choose $q$ test functions $f_1,\ldots,f_q \in \mathscr{F}$ and replace the expectation by the sample mean which yields the system of equations
\begin{align}  \label{emp_stein_identity}
\begin{gathered} 
    \frac{1}{n} \sum_{i=1}^n \mathcal{A}_{\theta}f_1(X_i) = 0, \\
    \vdots \\
    \frac{1}{n} \sum_{i=1}^n \mathcal{A}_{\theta}f_q(X_i) = 0.
\end{gathered}
\end{align}
Solving the latter system of equations for $\theta$ then gives an estimate $\hat{\theta}_n$ to which we will refer in the following as a \textit{Stein estimator}. This approach has been proven useful for univariate distributions \cite{ebner2025stein}, multivariate truncated distributions \cite{fischer2025stein} and distributions on manifolds \cite{fischer2025steinsphere}. More recently, the approach has been applied to obtain simple estimators for the local-dependency exponential random graph model \cite{fischer2025steinexp}. A key advantage is that the Stein operator is independent of the normalizing constant and therefore often linear in the parameters of interest which leads to closed-form estimators. Therefore, Stein estimators can be used as a starting value of an iterative algorithm which computes the MLE or as the final estimate if numerical optimization is not feasible. The estimators are also suitable if the support of the distribution is truncated as the Stein estimators only differ with respect to the test functions (see Examples \ref{example_truncated_poisson}, \ref{example_truncated_binomial} and \ref{example_truncated_multneg}). An established method in order to obtain Stein operators is the \textit{density approach} (see for example \cite{ley2013stein,stein1986approximate,stein2004use}) which was also used in \cite{ebner2025stein,fischer2025stein} to derive Stein estimators. These operators are defined by
\begin{align*}
    \mathcal{A}f(x) = \frac{\nabla (f(x)p_{\theta}(x))}{p_{\theta}(x)},
\end{align*}
where $p_{\theta}$ is the density function with respect to $\mathbb{P}_{\theta}$. The density approach Stein operator can be adapted to discrete distributions on $\mathbb{Z}^d$ by a using finite difference operator $\Delta^+ f(k)=f(k+1)-f(k)$ instead of the standard gradient in Euclidean space, see for example \cite{ley2013local,betsch2022characterizations}. In the present work we use these discrete density approach Stein operators to obtain Stein estimators for discrete probability distributions. This approach has already been considered in a narrower setting in \cite{nik2024generalized}, where the authors consider a slightly different Stein identity and apply it to the negative binomial distribution. \par

In the following paragraph we briefly discuss in greater depth the two most comparable approaches which are the score matching \cite{xu2025generalized} and the minimum distance method \cite{betsch2022characterizations}. We further indicate the limitations of the present paper. The score matching estimator is defined by
\begin{align*}
    \argmin_{\theta} \bigg\{ \frac{1}{n} \sum_{i=1}^n \iota \bigg( \frac{p_{\theta}(X_i+1)}{p_{\theta}(X_i)} \bigg)^2 + \iota\bigg( \frac{p_{\theta}(X_i)}{p_{\theta}(X_i-1)} \bigg)^2 - 2\iota \bigg( \frac{ p_{\theta}(X_i+1)}{p_{\theta}(X_i)}  \bigg) \bigg\},
\end{align*}
where $\iota(u)=1/(1+u)$, and the minimum distance estimator is given by
\begin{align*}
     \argmin_{\theta} \bigg\{\sum_{k} \bigg( \frac{1}{n} \sum_{i=1}^n \mathbbm{1}\{X_i=k\} + \frac{p_{\theta}(X_i+1)-p_{\theta}(X_i)}{p_{\theta}(X_i)} \mathbbm{1}\{X_i>k\} \bigg)^2 \bigg\},
\end{align*}
where the sum with respect to $k$ ranges over the domain of the distribution. As can be readily seen, both estimators are suitable for unnormalized models. However, both estimators are often not available in closed-form (see Example \ref{example_yules_simon}), even for simple univariate probability distributions. This necessitates the use of numerical procedures to compute the estimate which can be unfeasible in a high-dimensional setting. Moreover, the minimum distance estimator involves a summation ranging from the lowest to the largest observed value which can be computationally challenging and the authors do not provide a generalization to multivariate distributions. The fact that both estimation techniques above, although independent of the normalizing constant, do often require numerical computation was the main motivation for this paper: For all examples in this manuscript the Stein estimator can be computed in closed-form. However, also our approach bears several limitations: For some discrete probability distributions, also the Stein's methods of Moments approach does not yield closed-form estimators. For instance, for the multivariate examples in Section \ref{section_multivariate} we limit ourselves to estimation of a subset of parameters of the model and assume the remaining parameters to be known. Furthermore, explicitness of the estimator is accompanied by the forfeiture of asymptotic efficiency. Nonetheless, the performance in the simulation studies suggests that the loss of efficiency is only minor. This is also supported by Example \ref{example_logarithmic} where we compute the asymptotic variance of the Stein estimator for the logarithmic distribution and compare it to the asymptotically efficient MLE. Moreover, in Example \ref{example_yules_simon} (where we consider the Yules-Simon distribution), we included the score matching and minimum distance approach in the simulation study and observe that the Stein estimator achieves higher performance on an average level. \par

The rest of the paper is organized as follows: In Section \ref{section_estimation} we introduce the discrete density approach Stein operator together with a suitable function class which allow for statistical inference. We further explore several univariate examples and compare our new estimator to the maximum likelihood method through simulation studies in terms of bias and mean squared error (MSE). In Section \ref{section_multivariate} we extend the concept to multivariate distributions and work out two more examples. In Section \ref{section_unknwon_truncation} we investigate truncated distributions on a rigorous level: We show that the asymptotic covariance of the Stein estimator is not affected if the truncated domain has to be estimated (under the assumption that the support is a rectangle in $\mathbb{Z}^d$).

\section{Estimation} \label{section_estimation}

We introduce the discrete density approach Stein operator by using a finite difference operator instead of the gradient (see \cite{ley2013local,betsch2022characterizations}). Let us consider probability distribution with support $U = \{a, \ldots, b\} \subset \mathbb{Z}$, where $ -\infty \leq a < b \leq \infty$ and probability mass function $p_{\theta}(k)$, $ k \in U$ with corresponding probability measure $\mathbb{P}_{\theta}$ where $\theta \in \Theta \subset \mathbb{R}^q$. We define the operator $\Delta^{+}f(k) = f(k+1) - f(k)$ for a function $f: \{a, \ldots, b+1 \} \rightarrow \mathbb{R}$, where we set $f(x)=0$ if $x \notin U$. In addition, let $\tau_{\theta}$ be a function defined on $U$. Then, the Stein operator is defined as
\begin{align} \label{def_stein_op}
    \mathcal{A}_{\theta} f(k) = \frac{\Delta^{+} (f(k) \tau_{\theta}(k) p_{\theta}(k)) }{p_{\theta}(k)} = \frac{f(k+1) p_{\theta}(k+1) \tau_{\theta}(k+1) }{p_{\theta}(k)} - f(k) \tau_{\theta}(k), \qquad k \in  \{a, \ldots, b \}
\end{align}
with the convention that $p_{\theta}(b+1)$ and $ \tau_{\theta}(b+1)$ are equal to some arbitrary strictly positive number if $b < \infty$. We now introduce the function class $\mathscr{F}_{\theta}$ which is defined as 
\begin{align} \label{def_stein_class}
\begin{split}
    \mathscr{F}_{\theta}= \big\{ f: \{a, \ldots, b+1\} \rightarrow \mathbb{R} \, \vert  \, & f(a)=0 \text{ if } a>-\infty, \lim_{k \rightarrow - \infty} f(k) \tau_{\theta}(k)p_{\theta}(k) = 0 \text{ if } a= -\infty \\
    &\text{ and }  f(b+1)=0 \text{ if } b<\infty, \lim_{k \rightarrow  \infty} f(k) \tau_{\theta}(k)p_{\theta}(k) = 0 \text{ if } b= \infty \big\}.
\end{split}
\end{align}
Now define $\mathscr{F} = \cap_{\theta \in \Theta} \mathscr{F}_{\theta}$. The next theorem states that the expectation under $\mathbb{P}_{\theta}$ of the Stein operator \eqref{def_stein_op} applied to functions from the class \eqref{def_stein_class} indeed cancels out. Similar results were already proven in \cite[Theorem 2.1]{ley2013local} and \cite[Theorem 2.2]{betsch2022characterizations} under slightly different assumptions on the function class $\mathscr{F}_{\theta}$. For the proof, we refer to Theorem \ref{theorem_char_stein_mult} where the result is shown for multivariate distributions.
\begin{Theorem} \label{theorem_char_stein_uni}
    Let $X \sim \mathbb{P}_{\theta}$. Then,
    \begin{align*}
        \mathbb{E}[\mathcal{A}_{\theta} f(X) ] = 0
    \end{align*}
    for all $f \in \mathscr{F}$.
\end{Theorem}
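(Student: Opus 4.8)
The plan is to evaluate the expectation directly as a sum over the support and recognise it as a telescoping series. Write $g_{\theta}(k) := f(k)\,\tau_{\theta}(k)\,p_{\theta}(k)$ for $k \in \{a,\ldots,b+1\}$, using the stated conventions at $b+1$ when $b<\infty$. By the very definition \eqref{def_stein_op}, multiplying through by $p_{\theta}(k)$ yields $\mathcal{A}_{\theta} f(k)\,p_{\theta}(k) = g_{\theta}(k+1) - g_{\theta}(k) = \Delta^{+} g_{\theta}(k)$ for every $k \in \{a,\ldots,b\}$. Hence, provided the expectation exists (equivalently $\sum_{k} |\Delta^{+} g_{\theta}(k)| < \infty$),
\[
\mathbb{E}[\mathcal{A}_{\theta} f(X)] = \sum_{k=a}^{b} \mathcal{A}_{\theta} f(k)\, p_{\theta}(k) = \sum_{k=a}^{b} \big( g_{\theta}(k+1) - g_{\theta}(k) \big).
\]

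Next I would evaluate this telescoping sum according to whether the endpoints $a,b$ are finite. If both are finite, the sum collapses to $g_{\theta}(b+1) - g_{\theta}(a)$; since $f \in \mathscr{F} \subseteq \mathscr{F}_{\theta}$ forces $f(a)=0$ and $f(b+1)=0$, both boundary terms vanish. In particular the arbitrary strictly positive values assigned to $p_{\theta}(b+1)$ and $\tau_{\theta}(b+1)$ are irrelevant because they are multiplied by $f(b+1)=0$. If $a=-\infty$ and/or $b=+\infty$, I would instead work with partial sums $\sum_{k=a'}^{b'} \Delta^{+} g_{\theta}(k) = g_{\theta}(b'+1) - g_{\theta}(a')$ along sequences $a' \downarrow a$ and $b' \uparrow b$, and let these tend to zero using the decay conditions $\lim_{k\to-\infty} f(k)\tau_{\theta}(k)p_{\theta}(k) = 0$ and $\lim_{k\to\infty} f(k)\tau_{\theta}(k)p_{\theta}(k) = 0$ built into the class \eqref{def_stein_class}; absolute convergence of the series guarantees that the limit of these partial sums is indeed $\mathbb{E}[\mathcal{A}_{\theta} f(X)]$.

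Combining the cases gives $\mathbb{E}[\mathcal{A}_{\theta} f(X)] = 0$ for every configuration of $(a,b)$, which is the assertion. The only delicate point is the unbounded-support case: one must take care to evaluate the series through an exhausting sequence of partial sums rather than by a formal rearrangement, and to observe that the vanishing/limit conditions defining $\mathscr{F}_{\theta}$ are precisely what is needed to annihilate the boundary contributions in the limit. Everything else is routine bookkeeping, and the argument is a one-dimensional instance of the multivariate Theorem~\ref{theorem_char_stein_mult}, where the same telescoping is performed coordinate by coordinate.
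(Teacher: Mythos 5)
Your proposal is correct and follows essentially the same route as the paper: the paper proves this by deferring to Theorem \ref{theorem_char_stein_mult}, whose proof is exactly your telescoping argument with the boundary terms annihilated by the conditions defining $\mathscr{F}_{\theta}$ (vanishing at finite endpoints, decay of $f\tau_{\theta}p_{\theta}$ at infinite ones). Your explicit caveat about needing absolute convergence of $\sum_k \Delta^{+}g_{\theta}(k)$ for the expectation to be well-defined is a fair point of care that the multivariate version handles via the summability condition $\sum_{k\in U}\Vert f(k)p_{\theta}(k)\tau_{\theta}(k)\Vert<\infty$ in its function class.
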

\begin{Remark}
    One can show that the function class $\mathscr{F}_{\theta}$ is characterizing for $\mathbb{P}_{\theta}$. To see this, one follows along the lines of the proofs in \cite[Theorem 2.1]{ley2013local} and \cite[Theorem 2.2]{betsch2022characterizations}: Let $X$ be a discrete random variable with support $U$ and probability mass function $p_X$ such that $\mathbb{E}[\mathcal{A}_{\theta} f_{\theta,m}(X) ]=0$. Now let $Z \sim \mathbb{P}_{\theta}$ and define for $m \in U$,
    \begin{align*}
        f_{\theta,m}(k)=\frac{1}{p_{\theta}(k)\tau_{\theta}(k)} \sum_{l=a}^{k-1} \big( \mathbbm{1}\{l \leq m \}- \mathbb{P}_{\theta}(Z\leq m) \big)p_{\theta}(l), \qquad k \in U
    \end{align*}
    and $f_{\theta,m}(b+1)=0$ if $b<\infty$. If $a>-\infty$ then we have  $f_{\theta,m}(a)=0$ as we sum over an empty set. If $a=-\infty$ we have
    \begin{align*}
        \lim_{k \rightarrow - \infty} f_{\theta,m}(k) \tau_{\theta}(k)p_{\theta}(k) = \lim_{k \rightarrow - \infty} \sum_{l=- \infty}^{k-1} \big( \mathbbm{1}\{l \leq m \}- \mathbb{P}_{\theta}(Z\leq m) \big)p_{\theta}(l) =0
    \end{align*}
    since the sum $\sum_{l=- \infty}^{b} \big( \mathbbm{1}\{l \leq m \}- \mathbb{P}_{\theta}(Z\leq m) \big)p_{\theta}(l)$ is finite. If $b<\infty$ we have $f_{\theta,m}(b+1)=0$ by definition and if $b=\infty$ we compute
    \begin{align*}
        &\lim_{k \rightarrow  \infty} f_{\theta,m}(k) \tau_{\theta}(k)p_{\theta}(k) = \lim_{k \rightarrow \infty} \sum_{l=a}^{k-1} \big( \mathbbm{1}\{l \leq m \}- \mathbb{P}_{\theta}(Z\leq m) \big)p_{\theta}(l) \\
         & =\sum_{l=- \infty}^{m} p_{\theta}(l) -   \mathbb{P}_{\theta}(Z\leq m) \lim_{k \rightarrow - \infty} \sum_{l=- \infty}^{k-1} p_{\theta}(l) = 0.
    \end{align*}
    Therefore, $f_{\theta,m} \in \mathscr{F}_{\theta}$ and 
    \begin{align*}
        0 = \mathbb{E}[\mathcal{A}_{\theta} f_{\theta,m}(X) ] &= \sum_{k=a}^b \frac{p_X(k)}{p_{\theta}(k)} \bigg( \sum_{l=a}^{k} \big( \mathbbm{1}\{l \leq m \}- \mathbb{P}_{\theta}(Z\leq m) \big)p_{\theta}(l)  \\
        & \qquad \qquad \qquad \qquad \qquad\qquad\qquad - \sum_{l=a}^{k-1} \big( \mathbbm{1}\{l \leq m \}- \mathbb{P}_{\theta}(Z\leq m) \big)p_{\theta}(l)  \bigg) \\
        &= \sum_{k=a}^b \frac{p_X(k)}{p_{\theta}(k)} \big( \mathbbm{1}\{k \leq m \}- \mathbb{P}_{\theta}(Z\leq m) \big)p_{\theta}(k) \\
        &= \sum_{k=a}^b p_X(k) \big( \mathbbm{1}\{k \leq m \}- \mathbb{P}_{\theta}(Z\leq m) \big) \\
        &= \mathbb{P}(X\leq m) - \mathbb{P}_{\theta}(Z\leq m)
    \end{align*}
    and hence $X \sim \mathbb{P}_{\theta}$.
\end{Remark}

\begin{Remark}
We could define the Stein operator also using the backward discrete derivative $\Delta^{-}f(k) = f(k) - f(k-1)$ by 
\begin{align*}
    \mathcal{A}_{\theta} f(k) = \frac{\Delta^{-} (f(k) \tau_{\theta}(k) p_{\theta}(k)) }{p_{\theta}(k)}, \qquad k \in  \{a, \ldots, b \}.
\end{align*}
For finite $a$ or $b$ we would then have the conditions $f(a-1) = 0$ resp.\ $f(b) = 0$ where the test functions are now defined on $\{a-1, \ldots, b \}$.
\end{Remark}

Let us recall the definition of a Stein estimator from the introduction. To this end, let $X_1, \ldots, X_n \sim \mathbb{P}_{\theta^{\star}}$ be independently and identically distributed (i.i.d.) on a common probability space $(\Omega,\mathcal{F},\mathbb{P})$, whereby $\mathbb{P}_{\theta^{\star}}$ is a discrete probability distribution with probability mass function $p_{\theta}$ with support $U=\{a,\ldots,b\}$ where $-\infty \leq a < b \leq \infty$. Hence, in this setting $\theta^{\star}$ denotes the true unknown parameter which is to be estimated. The Stein estimator is then the solution $\hat{\theta}_n$ to \eqref{emp_stein_identity} where the Stein operator $\mathcal{A}_{\theta}$ is given by \eqref{def_stein_op} and the test functions $f_1,\ldots,f_q$ are chosen from $\mathscr{F}$. \par

In \cite{ebner2025stein} we work out conditions for existence, consistency and asymptotic normality of the Stein estimator $\hat{\theta}_n$ for continuous univariate distributions. These results translate easily to the case of univariate or multivariate discrete distributions and are satisfied for all examples we consider in Sections \ref{section_estimation} and \ref{section_multivariate}. Accordingly, we refrain from presenting a detailed derivation. However, we briefly comment on consistency and asymptotic normality in the next remark and prove that the asymptotic variance is not affected if $a$ and $b$ are unknown in Section \ref{section_unknwon_truncation}.

\begin{Remark} \label{remark_asymptotic_normality}
Suppose that the following holds (compare \cite[Assumption 2.1]{ebner2025stein}):
    \begin{description} 
        \item[(a)] Let $X \sim \mathbb{P}_{\theta^{\star}}$ and $\theta \in \Theta$. Then $f_1, \ldots, f_q \in \mathscr{F}$ are such that $\mathbb{E}[\mathcal{A}_{\theta}f(X)]=0$ if and only if $\theta=\theta^{\star}$, where $\mathcal{A}_{\theta}f(k)=(\mathcal{A}_{\theta}f_1(k),\ldots, \mathcal{A}_{\theta}f_q(k))^{\top}$.
        \item[(b)] Let $\tilde{q} \geq q$ and $X \sim \mathbb{P}_{\theta^{\star}}$. We can write $\mathcal{A}_{\theta}f(k) =M(k)g(\theta)$ for some measurable $q \times \tilde{q}$ matrix $M$ with $\mathbb{E}[\Vert M(X) \Vert] < \infty $ and a continuously differentiable function $g=(g_1,\ldots, g_{\tilde{q}})^\top:\Theta \rightarrow \mathbb{R}^{\tilde{q}} $ for all $\theta \in \Theta$, $k \in \{a,\ldots,b\}$. We also assume that $\mathbb{E}[M(X)]\frac{\partial}{\partial \theta} g(\theta) \vert_{\theta=\theta^{\star}}$ is invertible, where $\frac{\partial}{\partial \theta} g(\theta) \vert_{\theta=\theta^{\star}}$ denotes the gradient of $g$ with respect to $\theta$, evaluated at $\theta=\theta^{\star}$.
    \end{description}
    We then have that $\hat{\theta}_n$ exists with probability converging to $1$ and is consistent for $\theta^{\star}$. If moreover, $\mathbb{E}[\Vert M(X) \Vert^2] < \infty $, we have that
    \begin{align*}
        \sqrt{n}(\hat{\theta}_n-\theta^{\star}) \xrightarrow{D} N(0,\Sigma), 
    \end{align*}
    where $\xrightarrow{D}$ denotes convergence in distribution and
    \begin{align*}
        \Sigma =  \mathbb{E}\bigg[ \frac{\partial}{\partial \theta} \mathcal{A}_{\theta} f(X) \Big\vert_{\theta = \theta^{\star}} \bigg]^{-1} \mathbb{E}\bigg[ \big( \mathcal{A}_{\theta^{\star}} f(X) \big)^2 \bigg] \mathbb{E}\bigg[ \frac{\partial}{\partial \theta} \mathcal{A}_{\theta} f(X) \Big\vert_{\theta = \theta^{\star}} \bigg]^{-\top}
    \end{align*}
    (see Theorem 2.2 and Theorem 2.3 in \cite{ebner2025stein}).
\end{Remark}

We provide several examples of univariate discrete probability distributions to which our estimation technique can be applied to. In the first two examples we consider the Poisson and binomial distribution and show that the standard estimators can be obtained by a proper choice of the test function. In what follows we write $\overline{f(X)}=\frac{1}{n} \sum_{i=1}^n f(X_i)$ for a function $f: U \rightarrow \mathbb{R}$.

\begin{Example} \label{example_poisson}
    For the Poisson distribution $P(\lambda)$ on $U=\mathbb{N}_0$ with parameter $\theta=\lambda > 0$ we have $p_{\theta}(k) = \frac{e^{-\lambda}\lambda^k}{k!}$, $\tau_{\theta}(k)=k$ and we get
    \begin{align} \label{poisson_stein_operator}
        \mathcal{A}_{\theta} f(k) = \lambda f(k+1) - k f(k). 
    \end{align}
    For an i.i.d.\ sample $X_1,\ldots,X_n \sim P(\lambda^{\star})$ the Stein estimator is given by
    \begin{align} \label{poisson_stein_estimator}
        \hat{\lambda}_n = \frac{\overline{Xf(X)}}{\overline{f(X+1)}}.
    \end{align}
    The test function choice $f(k)=1$ gives the standard estimator $\hat{\lambda}_n=\overline{X}$.
\end{Example}

\begin{Example} \label{example_binomial}
    For the Binomial distribution $B(m,p)$ on $U=\{0, \ldots, m\}$ with parameter $\theta=p \in (0,1)$ and $m \in \mathbb{N}_0$ we have $p_{\theta}(k) = \binom{m}{k} p^k (1-p)^{m-k}$ and $\tau_{\theta}(k)=(1-p)/p$ and we get
    \begin{align} \label{binomial_stein_operator}
        \mathcal{A}_{\theta} f(k) = \frac{m-k}{(k+1)}f(k+1) - \frac{1-p}{p}f(k) .   
    \end{align}
    For an i.i.d.\ sample $X_1,\ldots,X_n \sim B(m,p^{\star})$ we obtain the estimator
    \begin{align} \label{binomial_stein_estimator}
        \hat{p}_n = \bigg(1+ \frac{\overline{(m-X)f(X+1)/(X+1)}}{\overline{f(X)}} \bigg)^{-1},
    \end{align}
    where we can ignore the restriction $f(m+1)=0$ as $m-X_i=0$ if $X_i=m$. The standard estimator $\hat{p}_n=\overline{X}/m$ is recovered by the test function choice $f(x)=x$.
\end{Example}

\begin{Example} \label{example_yules_simon}
    For the Yule-Simon distribution $YS(\rho)$ on $U= \mathbb{N}$ with parameter $\theta=\rho > 0$ we have $p_{\theta}(k) = \rho B(k,\rho+1) $ (here, $B(\cdot,\cdot)$ is the Beta function) and  $\tau_{\theta}(k)=k+\rho$ we get
    \begin{align*}
        \mathcal{A}_{\theta} f(k) = k f(k+1) - (k+\rho) f(k).
    \end{align*}
    For an i.i.d.\ sample $X_1,\ldots,X_n \sim YS(\rho^{\star})$ the estimator is therefore given by
    \begin{align*}
        \hat{\rho}_n = \frac{\overline{Xf(X+1)} - \overline{Xf(X)}}{\overline{f(X)}}
    \end{align*}
    The score matching estimator is here given by
    \begin{align*}
        \hat{\theta}_n^{\mathrm{SM}} = \argmin_{\rho>0} \bigg\{ \frac{1}{n} \sum_{i=1}^n \iota \bigg( \frac{X_i}{X_i+1+\rho} \bigg)^2 + \iota\bigg( \frac{X_i-1}{X_i+\rho} \bigg)^2 - 2\iota \bigg( \frac{X_i}{X_i+1+\rho}  \bigg) \bigg\}
    \end{align*}
    and for the minimum distance estimator we obtain
    \begin{align*}
        \hat{\theta}_n^{\mathrm{MD}} = \argmin_{\rho>0} \bigg\{\sum_{k=1}^{\infty} \bigg( \frac{1}{n} \sum_{i=1}^n \mathbbm{1}\{X_i=k\} + \bigg( \frac{X_i}{X_i+1+\rho} -1 \bigg) \mathbbm{1}\{X_i>k\} \bigg)^2 \bigg\}.
    \end{align*}
    We compare the Stein estimator $\hat{\theta}_n^{\mathrm{ST}}$ to the MLE $\hat{\theta}_n^{\mathrm{ML}}$, the score matching estimator from \cite{xu2025generalized} $\hat{\theta}_n^{\mathrm{SM}}$ and the minimum distance estimator from \cite{betsch2022characterizations} $\hat{\theta}_n^{\mathrm{MD}}$ by means of a simulation study in R \cite{R}. Simulation results can be found in Table \ref{yulesimon_sim}, we report bias, MSE and we also included a column \textit{NE} (not existent) which reports the average relative frequency (out of $100$) of estimates which were not eligible (for example if the estimate was negative or the code threw an error due to failure of the numerical optimization). Regarding the Stein estimator we used the test function choice $f(k) = \log(k)$. With respect to $\hat{\theta}_n^{\mathrm{MD}}$, we excluded the first three parameter values from the simulation since evaluating the target function was computationally unfeasible (due to small and large realizations in the sample). Numerical optimization for $\hat{\theta}_n^{\mathrm{ML}}$, $\hat{\theta}_n^{\mathrm{SM}}$ and $\hat{\theta}_n^{\mathrm{MD}}$ was performed using the L-BFGS-B option in the R \texttt{optim} function with starting value $\rho=1$. One observes that all four estimators perform similarly in terms of bias and $\hat{\theta}_n^{\mathrm{ML}}$, $\hat{\theta}_n^{\mathrm{ST}}$ outperform $\hat{\theta}_n^{\mathrm{SM}}$, $\hat{\theta}_n^{\mathrm{MD}}$ throughout all parameter constellation regarding the MSE. We also note that $\hat{\theta}_n^{\mathrm{ML}}$ and $\hat{\theta}_n^{\mathrm{ST}}$ show a very similar behavior (except for $\rho=0.1$).
\end{Example}

\begin{table} 
\centering
\begin{tabular}{cc|cccc|cccc|cccc}
 $\rho^{\star}$ & & \multicolumn{4}{c|}{Bias} & \multicolumn{4}{c|}{MSE} & \multicolumn{4}{c}{NE} \\ \hline
   & & $\hat{\theta}_n^{\mathrm{ML}}$ & $\hat{\theta}_n^{\mathrm{SM}}$ & $\hat{\theta}_n^{\mathrm{MD}}$ & $\hat{\theta}_n^{\mathrm{ST}}$ & $\hat{\theta}_n^{\mathrm{ML}}$ & $\hat{\theta}_n^{\mathrm{SM}}$ & $\hat{\theta}_n^{\mathrm{MD}}$ & $\hat{\theta}_n^{\mathrm{ST}}$ & $\hat{\theta}_n^{\mathrm{ML}}$ & $\hat{\theta}_n^{\mathrm{SM}}$ & $\hat{\theta}_n^{\mathrm{MD}}$ & $\hat{\theta}_n^{\mathrm{ST}}$ \\ \hline
\multirow{1}{*}{$0.1$} & $\rho$  & $6.29\text{e-4}$ & $0.363$ & -- & $0.078$ & $1.79\text{e-4}$ & $0.598$ & -- & 0.379 & \multirow{1}{*}{4} & \multirow{1}{*}{0} & \multirow{1}{*}{--} & \multirow{1}{*}{68} \\  \hline 
\multirow{1}{*}{$0.5$} & $\rho$  & $0.013$ & $0.092$ & -- & $0.013$ & $7.07\text{e-3}$ & $0.286$ & -- & 7.04\text{e-3} & \multirow{1}{*}{0} & \multirow{1}{*}{0} & \multirow{1}{*}{--} & \multirow{1}{*}{0} \\  \hline 
\multirow{1}{*}{$0.9$} & $\rho$  & $0.032$ & $0.064$ & -- & $0.03$ & $0.03$ & $0.374$ & -- & 0.03 & \multirow{1}{*}{0} & \multirow{1}{*}{0} & \multirow{1}{*}{--} & \multirow{1}{*}{0} \\  \hline 
\multirow{1}{*}{$1$} & $\rho$  & $0.038$ & $0.069$ & $0.06$ & $0.036$ & $0.04$ & $0.404$ & $0.133$ & 0.039 & \multirow{1}{*}{0} & \multirow{1}{*}{0} & \multirow{1}{*}{0} & \multirow{1}{*}{0} \\  \hline 
\multirow{1}{*}{$1.5$} & $\rho$  & $0.075$ & $0.094$ & $0.092$ & $0.07$ & $0.124$ & $0.567$ & $0.268$ & 0.124 & \multirow{1}{*}{0} & \multirow{1}{*}{0} & \multirow{1}{*}{0} & \multirow{1}{*}{0} \\  \hline 
\multirow{1}{*}{$2$} & $\rho$  & $0.134$ & $0.13$ & $0.14$ & $0.122$ & $0.292$ & $0.84$ & $0.514$ & 0.292 & \multirow{1}{*}{0} & \multirow{1}{*}{0} & \multirow{1}{*}{0} & \multirow{1}{*}{0} \\  \hline 
\multirow{1}{*}{$2.5$} & $\rho$  & $0.197$ & $0.19$ & $0.206$ & $0.182$ & $0.634$ & $1.31$ & $0.985$ & 0.636 & \multirow{1}{*}{0} & \multirow{1}{*}{0} & \multirow{1}{*}{0} & \multirow{1}{*}{0} \\  \hline 
\multirow{1}{*}{$3$} & $\rho$  & $0.279$ & $0.249$ & $0.281$ & $0.257$ & $1.31$ & $2.09$ & $1.8$ & 1.31 & \multirow{1}{*}{0} & \multirow{1}{*}{0} & \multirow{1}{*}{0} & \multirow{1}{*}{0} \\  \hline 
\multirow{1}{*}{$3.5$} & $\rho$  & $0.372$ & $0.313$ & $0.36$ & $0.341$ & $1.93$ & $2.71$ & $2.56$ & 1.92 & \multirow{1}{*}{0} & \multirow{1}{*}{0} & \multirow{1}{*}{0} & \multirow{1}{*}{0} \\  \hline 
\multirow{1}{*}{$4$} & $\rho$  & $0.504$ & $0.433$ & $0.493$ & $0.469$ & $3.49$ & $4.31$ & $4.39$ & 3.46 & \multirow{1}{*}{0} & \multirow{1}{*}{0} & \multirow{1}{*}{0} & \multirow{1}{*}{0} \\  \hline 
\end{tabular} 
\caption{\protect\label{yulesimon_sim} Simulation results for the $YS(\rho)$ distribution for $n=50$ and $10{,}000$ repetitions.}
\end{table}

\begin{Example}
    For the Beta negative binomial distribution $BNB(\alpha,\beta,r)$ on $U= \mathbb{N}_0$ with parameter $\theta = (\alpha,\beta)$, where $\alpha, \beta> 0$ and $r >0$ we have $p_{\theta}(k) = \frac{B(r+k, \alpha + \beta)}{B(r, \alpha)} \frac{\Gamma(k+\beta)}{k! \Gamma(\beta)} $  (here $\Gamma(\cdot)$ is the gamma function). With $\tau_{\theta}(k)=(r+k+\alpha + \beta -1 )k$ we get
    \begin{align*}
        \mathcal{A}_{\theta} f(k) = (r+k)(k+\beta) f(k+1) - (r+k+\alpha + \beta -1 )kf(k).
    \end{align*}
    For an i.i.d.\ sample $X_1,\ldots X_n \sim BNB(\alpha^{\star},\beta^{\star},r)$ (where we assume $r>0$ to be known) and two test functions $f_1,f_2$ we get the Stein estimators
    \begin{align*}
        \hat{\alpha}_n&= \frac{M_n^{(3)} M_n^{(5)} - M_n^{(2)} M_n^{(6)}}{M_n^{(2)} M_n^{(4)} - M_n^{(1)} M_n^{(5)}}, \\
        \hat{\beta}_n&= \frac{M_n^{(1)} M_n^{(6)} - M_n^{(3)} M_n^{(4)}}{M_n^{(2)} M_n^{(4)} - M_n^{(1)} M_n^{(5)}},
    \end{align*}
    where 
    \begin{gather*}
        M_n^{(1)} = -\overline{Xf_1(X)}, \qquad  M_n^{(2)} = \overline{(r+X)f_1(X+1)}-\overline{Xf_1(X)} \\
        M_n^{(4)} = -\overline{Xf_2(X)}, \qquad  M_n^{(5)} = \overline{(r+X)f_2(X+1)}-\overline{Xf_2(X)}
    \end{gather*}
    and
    \begin{gather*}
        M_n^{(3)} = \overline{(r+X)X f_1(X+1)} - \overline{(r+X-1)Xf_1(X)}, \\
        M_n^{(6)} = \overline{(r+X)X f_2(X+1)} - \overline{(r+X-1)Xf_2(X)}.
    \end{gather*}
    Simulation results can be found in Table \ref{betanegbin_sim} for the Stein estimator $\hat{\theta}_n^{\mathrm{ST}}=(\hat{\alpha}_n^{\mathrm{ST}}, \hat{\beta}_n^{\mathrm{ST}})$ and the MLE $\hat{\theta}_n^{\mathrm{ML}}=(\hat{\alpha}_n^{\mathrm{ML}}, \hat{\beta}_n^{\mathrm{ML}})$. For the Stein estimator we used the test functions $f_1(k) = k$ and $f_2(k) = 1$. In order to exclude outliers for the estimation of bias and MSE we considered the truncated mean for both quantities with respect to the truncation threshold $10$, i.e.\ if $\vert \hat{\alpha}_n - \alpha \vert > 10$ or $\vert \hat{\beta}_n - \beta \vert > 10$, we considered the estimate as non-eligible for both estimators. Numerical optimization for the MLE was performed using the Nelder-Mead algorithm as implemented in the R function \texttt{optim}. We see that the MLE yields overall the best results, however, the Stein estimator achieves a lower bias for some parameter constellations.
\end{Example}

\begin{table} 
\centering
\begin{tabular}{cc|cc|cc|cc}
 $(\alpha^{\star},\beta^{\star},r)$ & & \multicolumn{2}{c|}{Bias} & \multicolumn{2}{c|}{MSE} & \multicolumn{2}{c}{NE} \\ \hline
& & $\hat{\theta}_n^{\mathrm{ML}}$ & $\hat{\theta}_n^{\mathrm{ST}}$ & $\hat{\theta}_n^{\mathrm{ML}}$ & $\hat{\theta}_n^{\mathrm{ST}}$ & $\hat{\theta}_n^{\mathrm{ML}}$ & $\hat{\theta}_n^{\mathrm{ST}}$ \\ \hline
\multirow{2}{*}{$(10,10,10)$} & $\alpha$  & $0.342$ & $0.553$ & $3.08$ & $4.31$ & \multirow{2}{*}{$0$} & \multirow{2}{*}{$0$} \\ & $\beta$  & $0.352$ & $0.582$ & $3.4$ & $4.82$ \\ \hline 
\multirow{2}{*}{$(15,20,5)$} & $\alpha$  & $-0.195$ & $-0.095$ & $10.8$ & $11$ & \multirow{2}{*}{$15$} & \multirow{2}{*}{$17$} \\ & $\beta$  & $-0.307$ & $-0.167$ & $20.9$ & $21.3$ \\ \hline 
\multirow{2}{*}{$(8,10,3)$} & $\alpha$  & $0.43$ & $0.718$ & $4.8$ & $5.87$ & \multirow{2}{*}{$4$} & \multirow{2}{*}{$5$} \\ & $\beta$  & $0.579$ & $0.987$ & $8.94$ & $11$ \\ \hline 
\multirow{2}{*}{$(3,5,7)$} & $\alpha$  & $0.056$ & $0.421$ & $0.116$ & $0.46$ & \multirow{2}{*}{$0$} & \multirow{2}{*}{$0$} \\ & $\beta$  & $0.101$ & $0.98$ & $0.41$ & $2.19$ \\ \hline 
\multirow{2}{*}{$(3,3,2)$} & $\alpha$  & $0.136$ & $0.621$ & $0.371$ & $1.01$ & \multirow{2}{*}{$0$} & \multirow{2}{*}{$0$} \\ & $\beta$  & $0.16$ & $0.863$ & $0.536$ & $1.73$ \\ \hline 
\multirow{2}{*}{$(7,5,6)$} & $\alpha$  & $0.277$ & $0.545$ & $1.79$ & $2.84$ & \multirow{2}{*}{$0$} & \multirow{2}{*}{$0$} \\ & $\beta$  & $0.208$ & $0.428$ & $1.05$ & $1.73$ \\ \hline 
\multirow{2}{*}{$(15,14,8)$} & $\alpha$  & $0.52$ & $0.683$ & $10.9$ & $12.4$ & \multirow{2}{*}{$4$} & \multirow{2}{*}{$5$} \\ & $\beta$  & $0.506$ & $0.67$ & $10.4$ & $11.9$ \\ \hline 
\multirow{2}{*}{$(10,10,3)$} & $\alpha$  & $0.493$ & $0.77$ & $8.92$ & $10.1$ & \multirow{2}{*}{$7$} & \multirow{2}{*}{$9$} \\ & $\beta$  & $0.523$ & $0.829$ & $10.3$ & $11.8$ \\ \hline 
\multirow{2}{*}{$(4,8,2)$} & $\alpha$  & $0.225$ & $0.627$ & $0.887$ & $1.57$ & \multirow{2}{*}{$1$} & \multirow{2}{*}{$2$} \\ & $\beta$  & $0.524$ & $1.58$ & $4.92$ & $9.32$ \\ \hline 
\multirow{2}{*}{$(9,9,4)$} & $\alpha$  & $0.554$ & $0.849$ & $6.16$ & $7.69$ & \multirow{2}{*}{$2$} & \multirow{2}{*}{$2$} \\ & $\beta$  & $0.587$ & $0.916$ & $7.08$ & $8.94$ \\ \hline 
\end{tabular} 
\caption{\protect\label{betanegbin_sim} Simulation results for the $BNB(\alpha,\beta,r)$ distribution for $n=300$ and $10{,}000$ repetitions.}
\end{table}

\begin{Example} \label{example_logarithmic}
    For the logarithmic distribution $LG(p)$ on $U= \mathbb{N}$ with parameter $\theta=p$, where $0< p < 1$  we have $p_{\theta}(k) = \frac{-1}{\log(1-p)} \frac{p^k}{k}$. With $\tau_{\theta}(k)=1$ we get
    \begin{align*}
        \mathcal{A}_{\theta} f(k) =  p \frac{k}{k+1}f(k+1) - f(k). 
    \end{align*}
    For an i.i.d.\ sample $X_1,\ldots,X_n \sim LG(p^{\star})$ this gives the Stein estimator
    \begin{align*}
        \hat{p}_n = \frac{\overline{f(X)}}{\overline{Xf(X+1)/(X+1)}}.
    \end{align*}
Simulation results can be found in Table \ref{logarthmic_sim} for the Stein estimator $\hat{\theta}_n^{\mathrm{ST}}$ and the MLE $\hat{\theta}_n^{\mathrm{ML}}$ whereby we used the test function $f(k)=k-1$ for the Stein estimator. As per calculation of the MLE, we used the explicit formula 
\begin{align*}
    \hat{p}_n^{\mathrm{ML}} = 1- \exp\bigg( W_{-1} \bigg( -\frac{1}{\overline{X} \exp(1/\overline{X}) } \bigg) + \frac{1}{\overline{X}} \bigg)
\end{align*}
provided in \cite{rodrigues2025closed}. In the latter expression $W_{-1}(\cdot)$ denotes the Lambert W-function. From Table \ref{logarthmic_sim} one observes that the performance of Stein estimator and MLE is very similar, for both bias and MSE. This observation is also confirmed by a plot of the relative asymptotic efficiency in Figure \ref{fig:logarithmic_variance}. Letting $X \sim LG(p)$ the asymptotic variances are given by
    \begin{align*}
        V^{\mathrm{ML}}(p)= \frac{(1-p)^2 p (\log(1-p))^2}{-p-\log(1-p)}
    \end{align*}
    for the MLE and
    \begin{align*}
        V^{\mathrm{ST}}(p)=\mathbb{E}\bigg[\bigg( \frac{pX^2}{X+1}-X+1  \bigg)^2 \bigg] \mathbb{E}\bigg[ \frac{X^2}{X+1}2 \bigg]^{-2}
    \end{align*}
    for the Stein estimator (see Remark \ref{remark_asymptotic_normality}). As one can observe in the plot, the relative asymptotic efficiency of the Stein estimator is close to $1$ across the whole parameter space.
\end{Example}

\begin{table} 
\centering
\begin{tabular}{cc|cc|cc|cc}
 $p^{\star}$ & & \multicolumn{2}{c|}{Bias} & \multicolumn{2}{c|}{MSE} & \multicolumn{2}{c}{NE} \\ \hline
& & $\hat{\theta}_n^{\mathrm{ML}}$ & $\hat{\theta}_n^{\mathrm{ST}}$ & $\hat{\theta}_n^{\mathrm{ML}}$ & $\hat{\theta}_n^{\mathrm{ST}}$ & $\hat{\theta}_n^{\mathrm{ML}}$ & $\hat{\theta}_n^{\mathrm{ST}}$ \\ \hline
\multirow{1}{*}{$0.1$} & $p$  & $4.54\text{e-3}$ & $4.57\text{e-3}$ & $2.67\text{e-3}$ & 2.67\text{e-3} & \multirow{1}{*}{8} & \multirow{1}{*}{8} \\  \hline 
\multirow{1}{*}{$0.2$} & $p$  & $-5.12\text{e-3}$ & $-5.04\text{e-3}$ & $5.23\text{e-3}$ & 5.24\text{e-3} & \multirow{1}{*}{0} & \multirow{1}{*}{0} \\  \hline 
\multirow{1}{*}{$0.3$} & $p$  & $-8.46\text{e-3}$ & $-8.28\text{e-3}$ & $6.61\text{e-3}$ & 6.62\text{e-3} & \multirow{1}{*}{0} & \multirow{1}{*}{0} \\  \hline 
\multirow{1}{*}{$0.4$} & $p$  & $-9.9\text{e-3}$ & $-9.6\text{e-3}$ & $6.85\text{e-3}$ & 6.87\text{e-3} & \multirow{1}{*}{0} & \multirow{1}{*}{0} \\  \hline 
\multirow{1}{*}{$0.5$} & $p$  & $-0.011$ & $-0.011$ & $6.38\text{e-3}$ & 6.39\text{e-3} & \multirow{1}{*}{0} & \multirow{1}{*}{0} \\  \hline 
\multirow{1}{*}{$0.6$} & $p$  & $-0.011$ & $-0.011$ & $5.42\text{e-3}$ & 5.43\text{e-3} & \multirow{1}{*}{0} & \multirow{1}{*}{0} \\  \hline 
\multirow{1}{*}{$0.7$} & $p$  & $-0.011$ & $-0.01$ & $4.03\text{e-3}$ & 4.03\text{e-3} & \multirow{1}{*}{0} & \multirow{1}{*}{0} \\  \hline 
\multirow{1}{*}{$0.8$} & $p$  & $-9.31\text{e-3}$ & $-8.56\text{e-3}$ & $2.39\text{e-3}$ & 2.39\text{e-3} & \multirow{1}{*}{0} & \multirow{1}{*}{0} \\  \hline 
\multirow{1}{*}{$0.9$} & $p$  & $-6.38\text{e-3}$ & $-5.74\text{e-3}$ & $8.55\text{e-4}$ & 8.59\text{e-4} & \multirow{1}{*}{0} & \multirow{1}{*}{0} \\  \hline 
\multirow{1}{*}{$0.95$} & $p$  & $-3.89\text{e-3}$ & $-3.47\text{e-3}$ & $2.88\text{e-4}$ & 2.91\text{e-4} & \multirow{1}{*}{0} & \multirow{1}{*}{0} \\  \hline 
\end{tabular} 
\caption{\protect\label{logarthmic_sim} Simulation results for the $LG(p)$ distribution for $n=50$ and $10{,}000$ repetitions.}
\end{table}

\begin{figure}[ht!]
\centering
\captionsetup{width=.95\textwidth}
\includegraphics[width=7.4cm]{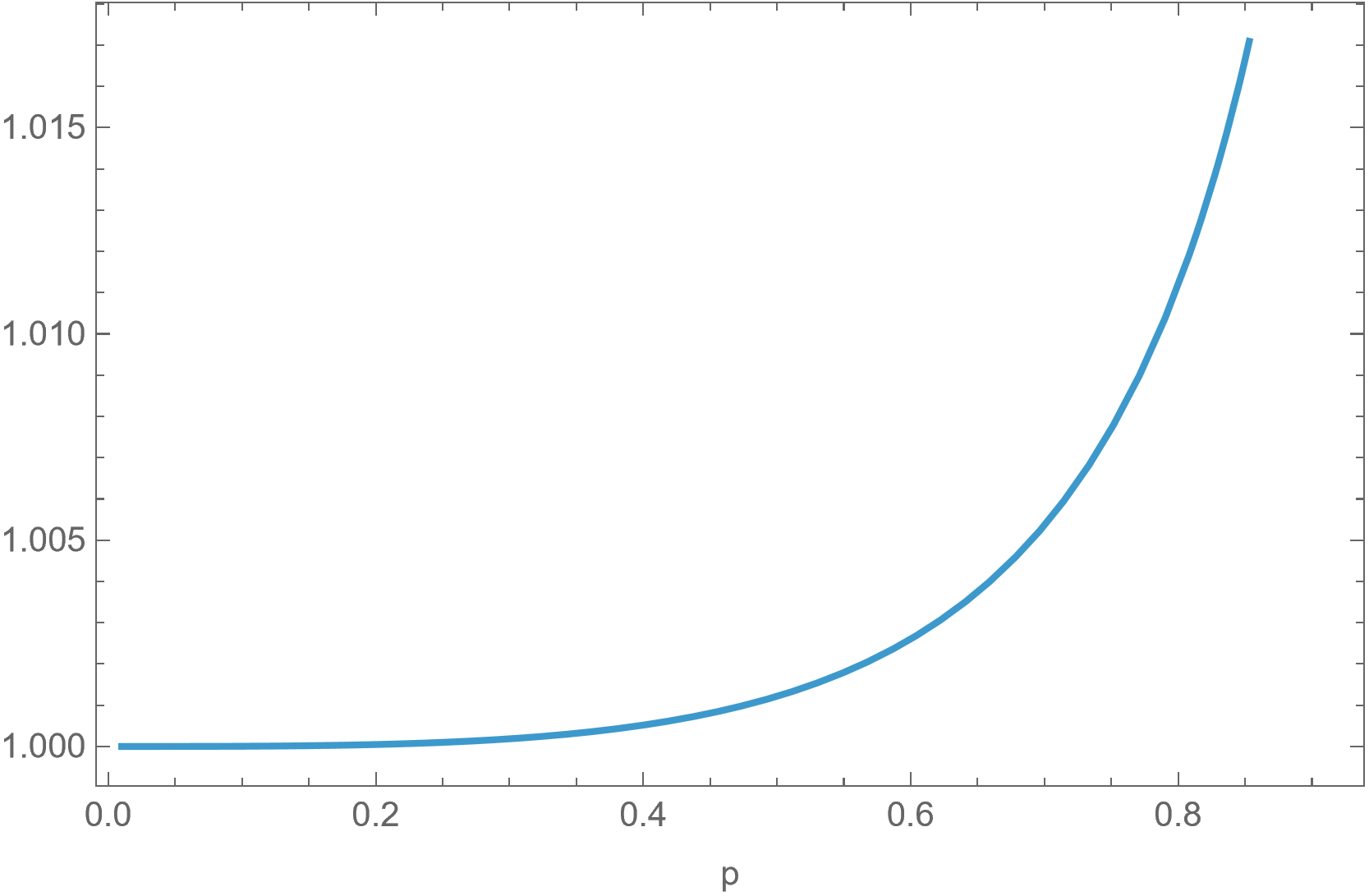}
\caption{\protect\label{fig:logarithmic_variance} \it Relative asymptotic efficiency of the Stein estimator $V^{\mathrm{ST}}(p)/V^{\mathrm{ML}}(p)$ as a function of $p$ for the logarithmic distribution.}
\end{figure}

As already mentioned in the introduction, Stein estimators are suitable for truncated distributions since complicated normalizing constants arising from integration (or summation) over the truncation domain cancel out. More precisely, the Stein operator is the same as in the untruncated case, whereby the function class has to be adjusted such that test functions vanish at the boundary of the truncation domain. This has already been demonstrated in \cite{fischer2025stein} for multivariate truncated continuous probability distributions. Let us revisit Example \ref{example_poisson} and Example \ref{example_binomial} with a truncated domain.

\begin{Example} \label{example_truncated_poisson}
    We consider a truncated Poisson distribution $TP(\lambda,a,b)$ with parameter $\theta=\lambda>0$ on $U=\{a,\ldots,b\}$ where $0 \leq a<b$. The pdf is given by
    \begin{align*}
        p_{\theta}(k)= \bigg( \sum_{i=a}^{b} \frac{\lambda^i}{i!} \bigg)^{-1} \frac{\lambda^k}{k!}.
    \end{align*}
    The Stein operator and estimator read the same as in \eqref{poisson_stein_operator} and \eqref{poisson_stein_estimator}. However, due to the truncation, the function class $\mathscr{F}_{\theta}$ is now such that $f(k)p_{\theta}(k)\tau_{\theta}(k)$ needs to cancel out at $a$ and $b$. In Table \ref{truncpoisson_sim} we report simulation results for the Stein estimator $\hat{\theta}_n^{\mathrm{ST}}$ and MLE $\hat{\theta}_n^{\mathrm{ML}}$. We used the test function
    \begin{align*}
        f(k) = \begin{cases} 0, & \text{if } k=a \text{ or } k=b \\ k, & \text{otherwise} \end{cases}.
    \end{align*}
    Regarding the MLE, maximization of the log-likelihood was performed with the L-BFGS-B option of the R \texttt{optim} function with starting value $\lambda=1$. We observe a similar performance of MLE and Stein estimator, whereby the $\hat{\theta}_n^{\mathrm{ST}}$ seems to be slightly more powerful in terms of bias and $\hat{\theta}_n^{\mathrm{ML}}$ in terms of MSE.
\end{Example}

\begin{table} 
\centering
\begin{tabular}{cc|cc|cc|cc}
 $(\lambda^{\star},a,b)$ & & \multicolumn{2}{c|}{Bias} & \multicolumn{2}{c|}{MSE}  & \multicolumn{2}{c}{NE} \\ \hline
& & $\hat{\theta}_n^{\mathrm{ML}}$ & $\hat{\theta}_n^{\mathrm{ST}}$ & $\hat{\theta}_n^{\mathrm{ML}}$ & $\hat{\theta}_n^{\mathrm{ST}}$ & $\hat{\theta}_n^{\mathrm{ML}}$ & $\hat{\theta}_n^{\mathrm{ST}}$ \\ \hline
\multirow{1}{*}{$(0.1,2,10)$} & $\lambda$  & $-1.53\text{e-3}$ & $0.022$ & $5.76\text{e-3}$ & 4.97\text{e-3} & \multirow{1}{*}{0} & \multirow{1}{*}{19} \\  \hline 
\multirow{1}{*}{$(0.5,0,30)$} & $\lambda$  & $-2.61\text{e-4}$ & $-2.62\text{e-4}$ & $9.83\text{e-3}$ & 9.83\text{e-3} & \multirow{1}{*}{0} & \multirow{1}{*}{0} \\  \hline 
\multirow{1}{*}{$(0.9,6,\infty)$} & $\lambda$  & $0.215$ & $3.58\text{e-3}$ & $0.09$ & 0.111 & \multirow{1}{*}{43} & \multirow{1}{*}{0} \\  \hline 
\multirow{1}{*}{$(1,0,80)$} & $\lambda$  & $-6.34\text{e-4}$ & $-6.34\text{e-4}$ & $0.02$ & 0.02 & \multirow{1}{*}{0} & \multirow{1}{*}{0} \\  \hline 
\multirow{1}{*}{$(1.5,6,85)$} & $\lambda$  & $-0.021$ & $-6.58\text{e-3}$ & $0.15$ & 0.171 & \multirow{1}{*}{0} & \multirow{1}{*}{0} \\  \hline 
\multirow{1}{*}{$(2,2,40)$} & $\lambda$  & $-0.011$ & $-2.4\text{e-3}$ & $0.068$ & 0.076 & \multirow{1}{*}{0} & \multirow{1}{*}{0} \\  \hline 
\multirow{1}{*}{$(2.5,6,90)$} & $\lambda$  & $-0.024$ & $-1.5\text{e-4}$ & $0.205$ & 0.248 & \multirow{1}{*}{0} & \multirow{1}{*}{0} \\  \hline 
\multirow{1}{*}{$(3,0,50)$} & $\lambda$  & $-1.68\text{e-3}$ & $-1.68\text{e-3}$ & $0.06$ & 0.06 & \multirow{1}{*}{0} & \multirow{1}{*}{0} \\  \hline 
\multirow{1}{*}{$(3.5,1,10)$} & $\lambda$  & $-2.9\text{e-3}$ & $-1.11\text{e-3}$ & $0.076$ & 0.077 & \multirow{1}{*}{0} & \multirow{1}{*}{0} \\  \hline 
\multirow{1}{*}{$(4,0,20)$} & $\lambda$  & $-2.07\text{e-3}$ & $-2.07\text{e-3}$ & $0.079$ & 0.079 & \multirow{1}{*}{0} & \multirow{1}{*}{0} \\  \hline 
\end{tabular} 
\caption{\protect\label{truncpoisson_sim} Simulation results for the $TP(\lambda,a,b)$ distribution for $n=50$ and $10{,}000$ repetitions.}
\end{table}

\begin{Example} \label{example_truncated_binomial}
    We consider a truncated binomial distribution $TB(m,p,a,b)$ with parameter $\theta=p>0$ with $0<p<1$ on $U=\{a,\ldots,b\}$ where $0\leq a < b \leq m$. The pdf is given by
    \begin{align*}
        p_{\theta}(k)= \bigg( \sum_{i=a}^{b} \binom{m}{i} p^i (1-p)^{m-i} \bigg)^{-1} \binom{m}{k} p^k (1-p)^{m-k}.
    \end{align*}
    The Stein operator and estimator read the same as in \eqref{binomial_stein_operator} and \eqref{binomial_stein_estimator}. Akin to the previous example we need that $f(k)p_{\theta}(k)\tau_{\theta}(k)$ cancel out at $a$ and $b$. In Table \ref{truncbinom_sim} we report simulation results for the Stein estimator $\hat{\theta}_n^{\mathrm{ST}}$ and MLE $\hat{\theta}_n^{\mathrm{ML}}$. We used the test function
    \begin{align*}
        f(k) = \begin{cases} 0, & \text{if } k=a \text{ or } k=b+1 \\ k, & \text{otherwise} \end{cases}
    \end{align*}
    and computed the MLE by numerical optimization with starting value $p=0.5$. Analogously to Example \ref{example_truncated_poisson}, we observe a small MSE-bias tradeoff.
\end{Example}

\begin{table} 
\centering
\begin{tabular}{cc|cc|cc|cc}
 $(p^{\star},a,b)$ & & \multicolumn{2}{c|}{Bias} & \multicolumn{2}{c|}{MSE}  & \multicolumn{2}{c}{NE} \\  \hline
& & $\hat{\theta}_n^{\mathrm{ML}}$ & $\hat{\theta}_n^{\mathrm{ST}}$ & $\hat{\theta}_n^{\mathrm{ML}}$ & $\hat{\theta}_n^{\mathrm{ST}}$ & $\hat{\theta}_n^{\mathrm{ML}}$ & $\hat{\theta}_n^{\mathrm{ST}}$  \\ \hline
\multirow{1}{*}{$(0.01,10,0,3)$} & $p$  & $-2.13\text{e-5}$ & $2.18\text{e-5}$ & $1.97\text{e-5}$ & 1.91\text{e-5} & \multirow{1}{*}{0} & \multirow{1}{*}{1} \\  \hline 
\multirow{1}{*}{$(0.05,50,0,5)$} & $p$  & $1.26\text{e-4}$ & $6.82\text{e-5}$ & $2.46\text{e-5}$ & 2.67\text{e-5} & \multirow{1}{*}{0} & \multirow{1}{*}{0} \\  \hline 
\multirow{1}{*}{$(0.1,20,2,9)$} & $p$  & $-5.89\text{e-4}$ & $-2.5\text{e-4}$ & $1.58\text{e-4}$ & 1.73\text{e-4} & \multirow{1}{*}{0} & \multirow{1}{*}{0} \\  \hline 
\multirow{1}{*}{$(0.2,20,5,10)$} & $p$  & $-1.21\text{e-3}$ & $-6.04\text{e-4}$ & $4.58\text{e-4}$ & 5.44\text{e-4} & \multirow{1}{*}{0} & \multirow{1}{*}{0} \\  \hline 
\multirow{1}{*}{$(0.25,30,10,20)$} & $p$  & $-1.72\text{e-3}$ & $-8.35\text{e-4}$ & $5.11\text{e-4}$ & 6.89\text{e-4} & \multirow{1}{*}{0} & \multirow{1}{*}{0} \\  \hline 
\multirow{1}{*}{$(0.4,50,2,40)$} & $p$  & $-9.15\text{e-5}$ & $-9.18\text{e-5}$ & $9.48\text{e-5}$ & 9.48\text{e-5} & \multirow{1}{*}{0} & \multirow{1}{*}{0} \\  \hline 
\multirow{1}{*}{$(0.5,50,25,35)$} & $p$  & $-1.15\text{e-3}$ & $-7.38\text{e-4}$ & $2.77\text{e-4}$ & 4.41\text{e-4} & \multirow{1}{*}{0} & \multirow{1}{*}{0} \\  \hline 
\multirow{1}{*}{$(0.75,30,0,25)$} & $p$  & $3.83\text{e-4}$ & $1.61\text{e-4}$ & $1.68\text{e-4}$ & 1.93\text{e-4} & \multirow{1}{*}{0} & \multirow{1}{*}{0} \\  \hline 
\multirow{1}{*}{$(0.8,10,1,8)$} & $p$  & $1.24\text{e-3}$ & $6.68\text{e-4}$ & $5.96\text{e-4}$ & 6.42\text{e-4} & \multirow{1}{*}{0} & \multirow{1}{*}{0} \\  \hline 
\multirow{1}{*}{$(0.99,30,20,29)$} & $p$  & $3.45\text{e-5}$ & $4.36\text{e-6}$ & $1.22\text{e-5}$ & 1.25\text{e-5} & \multirow{1}{*}{0} & \multirow{1}{*}{0} \\  \hline 
\end{tabular} 
\caption{\protect\label{truncbinom_sim} Simulation results for the $TB(m,p,a,b)$ distribution for $n=50$ and $10{,}000$ repetitions.}
\end{table}

\section{Multivariate distributions} \label{section_multivariate}
In this section we extend the notion of Stein estimators for discrete probability distributions to the multivariate setting. We consider probability distribution $\mathbb{P}_{\theta}$ with support $U = \{a_1, \ldots, b_1\} \times \ldots \times \{a_d, \ldots, b_d\} \subset \mathbb{Z}^d$, where $ -\infty \leq a_i < b_i \leq \infty$, $i=1,\ldots , d$ and corresponding probability mass function $p_{\theta}(k)$, $ k \in U$. As a first step, we generalize the discrete Stein operator \eqref{def_stein_op} for multivariate random variables. To this end, let us define the operator $\Delta^{+}f(k) = (\Delta_1^{+}f(k), \ldots,\Delta_d^{+}f(k) ) $, where $\Delta_i^{+}f(k) = f(k_1, \ldots, k_i+1, \ldots , k_d) - f(k) $ for a function $f: \{a_1, \ldots, b_1+1\} \times \ldots \times \{a_d, \ldots, b_d+1\} \rightarrow \mathbb{R}$, where we set $f(x)=0$ if $x \notin U$. For vector-valued $f=(f_1,\ldots,f_d)$ we define $\Delta^+f(x) = (\Delta_1^{+}f_1(k), \ldots,\Delta_d^{+}f_d(k) )^{\top} $. Moreover, we introduce the notation $\Diamond_i^{+}f(k) = f(k_1,\ldots,k_i+1,\ldots,k_d)$ and we can write $\Delta_i^{+}f(k) = \Diamond_i^{+}f(k) - f(k)$. Then, the Stein operator is defined as
\begin{align*}
    \mathcal{A}_{\theta} f(k) =& \frac{\Delta^{+} (f(k) \tau_{\theta}(k) p_{\theta}(k)) }{p_{\theta}(k)} \\
    &= \frac{1 }{p_{\theta}(k)} \Big( \Diamond_1^{+} \big( f(k) p_{\theta}(k) \tau_{\theta}^{(1)}(k) \big), \ldots, \Diamond_d^{+} \big( f(k) p_{\theta}(k) \tau_{\theta}^{(d)}(k) \big) \Big)^{\top}  \\
    & \qquad \qquad \qquad \qquad - \big( f(k) \tau_{\theta}(k), \ldots,  f(k) \tau_{\theta}(k) \big)^{\top}, \qquad k \in U
\end{align*}
with the convention that $p_{\theta}(k)$ and $ \tau_{\theta}(k)$ are equal to some arbitrary number if $k \notin U$ and here $\tau_{\theta}=(\tau_{\theta}^{(1)}, \ldots, \tau_{\theta}^{(d)})$ is now vector-valued and each $\tau_{\theta}^{(i)}$, $i=1,\ldots,d$ is a function $\tau_{\theta}^{(i)}:U \rightarrow \mathbb{R}$. Note that $ \mathcal{A}_{\theta} f:U \rightarrow \mathbb{R}^d$. The corresponding function class $\mathscr{F}_{\theta}$ is
\begin{align*}
    \mathscr{F}_{\theta}=  \bigg\{ f: & \{a_1, \ldots, b_1+1\} \times \ldots \times \{a_d, \ldots, b_d+1\} \rightarrow \mathbb{R} \, \Big\vert  \,  \sum_{k \in U} \Vert  f(k)p_{\theta} (k) \tau_{\theta}(k) \Vert < \infty  \\
    & \text{and } f(k)=0, k=(k_1,\ldots,k_d) \text{ if } k_i=a_i \text{ or } k_i=b_i \text{ for any } i=1,\ldots,d  \bigg\}.
\end{align*}
Again, we let $\mathscr{F} = \cap_{\theta \in \Theta} \mathscr{F}_{\theta} $. The following theorem is the multivariate analog to Theorem \ref{theorem_char_stein_uni}.

\begin{Theorem} \label{theorem_char_stein_mult}
    Let $X \sim \mathbb{P}_{\theta}$. Then, for $\mathcal{A}_{\theta}$ and $\mathscr{F}$ as introduced above,
    \begin{align*}
        \mathbb{E}[\mathcal{A}_{\theta} f(X) ] = 0
    \end{align*}
    for all $f \in \mathscr{F}$.
\end{Theorem}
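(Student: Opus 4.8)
The plan is to reduce the $d$-dimensional Stein identity to $d$ one-dimensional telescoping sums, mirroring the univariate argument. Fix $\theta$, let $X\sim\mathbb{P}_\theta$ and $f\in\mathscr{F}\subseteq\mathscr{F}_\theta$; since $\mathcal{A}_\theta f$ is $\mathbb{R}^d$-valued it suffices to show $\mathbb{E}\big[(\mathcal{A}_\theta f(X))_i\big]=0$ for each fixed $i\in\{1,\dots,d\}$. Expanding the expectation as a sum over the support $U$ and inserting the definition of $\mathcal{A}_\theta$, the weight $p_\theta(k)$ cancels the $1/p_\theta(k)$ in the $i$-th component (recall $p_\theta>0$ on $U$), leaving
\begin{align*}
\mathbb{E}\big[(\mathcal{A}_\theta f(X))_i\big]=\sum_{k\in U}\Delta_i^{+}\big(f(k)\tau_\theta^{(i)}(k)p_\theta(k)\big)=\sum_{k\in U}\big(\Diamond_i^{+}g_i(k)-g_i(k)\big),
\end{align*}
where $g_i(k):=f(k)\tau_\theta^{(i)}(k)p_\theta(k)$, extended by $g_i\equiv 0$ outside $U$; this extension is consistent with $f\equiv 0$ off $U$ and with the finite arbitrary values assigned to $p_\theta,\tau_\theta$ there. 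So the goal becomes $\sum_{k\in U}\Delta_i^{+}g_i(k)=0$.

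The first step is to record absolute convergence, which licenses every rearrangement below. The summability clause in the definition of $\mathscr{F}_\theta$ gives $\sum_{k\in U}|g_i(k)|\le\sum_{k\in U}\Vert f(k)\tau_\theta(k)p_\theta(k)\Vert<\infty$, and bijectivity of the coordinate shift $k\mapsto(k_1,\dots,k_i+1,\dots,k_d)$ onto its image transports this into a bound for $\sum_{k\in U}|\Diamond_i^{+}g_i(k)|$ as well; in particular $\mathbb{E}[\Vert\mathcal{A}_\theta f(X)\Vert]<\infty$ and the series above may be split freely.

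The second step is the telescoping in the $i$-th coordinate. Reindexing, $\sum_{k\in U}\Diamond_i^{+}g_i(k)=\sum_{k'\in U_i^{+}}g_i(k')$, where $U_i^{+}$ denotes $U$ with its $i$-th coordinate range shifted up by one; since $g_i$ is supported on $U$ this is $\sum_{k'\in U_i^{+}\cap U}g_i(k')$, and a short check shows $U_i^{+}\cap U$ is $U$ with the face $\{k_i=a_i\}$ removed if $a_i>-\infty$ and is all of $U$ if $a_i=-\infty$. Subtracting $\sum_{k\in U}g_i(k)$ therefore leaves $-\sum_{k\in U,\,k_i=a_i}g_i(k)$ in the first case, which vanishes since $f(k)=0$ whenever $k_i=a_i$ by definition of $\mathscr{F}_\theta$, and leaves $0$ in the second. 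Hence $\mathbb{E}\big[(\mathcal{A}_\theta f(X))_i\big]=0$, and letting $i$ range over $\{1,\dots,d\}$ gives the theorem; Theorem \ref{theorem_char_stein_uni} is the special case $d=1$. Equivalently, one can fix all coordinates but the $i$-th and telescope the one-dimensional sum of $\Delta_i^{+}g_i$ over $k_i$ from $a_i$ to $b_i$: the bottom term is $g_i$ at $k_i=a_i$, which is zero by the boundary condition, while the top term is $g_i$ at $k_i=b_i+1$, which is zero because that index lies outside $U$ — or, when $b_i=\infty$, it equals $\lim_{k_i\to\infty}g_i=0$ by absolute summability.

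The one genuinely delicate point is the unbounded case $a_i=-\infty$ or $b_i=\infty$: there the telescoping must be read as a statement about absolutely convergent series, so the summability condition in $\mathscr{F}_\theta$ is used twice — once to justify splitting and reindexing the sum, once to make the boundary contribution at $\pm\infty$ disappear. The rest is routine bookkeeping with the index sets $U$ and $U_i^{+}$.
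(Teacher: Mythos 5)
Your proof is correct and follows essentially the same route as the paper: reduce to the $i$-th component, cancel $p_\theta$, and telescope the sum in the $i$-th coordinate so that the boundary contributions vanish by the conditions defining $\mathscr{F}_\theta$ (with absolute summability handling the unbounded case). Your reindexing over the shifted set $U_i^{+}$ is just a repackaging of the paper's iterated-sum telescoping, which you also state explicitly, and your extra care in recording absolute convergence before rearranging is a welcome, if minor, addition.
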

\begin{proof}
    We consider each component of the Stein operator separately. Let $i \in \{1, \ldots, d\}$. If $-\infty < a_i < b_i < \infty$ then,
    \begin{align*}
        \mathbb{E}[[\mathcal{A}_{\theta} f(X)]_i ] &= \sum_{k \in U} \Delta_i^{+} (f(k) \tau_{\theta}^{(i)}(k) p_{\theta}(k)) \\
        &= \sum_{\substack{a_j \leq k_j \leq b_j, \\ j \neq i }} \sum_{k_i=a_i}^{b_i}  \Delta_i^{+} (f(k) \tau_{\theta}^{(i)}(k) p_{\theta}(k)) \\
        & = \sum_{\substack{a_j \leq k_j \leq b_j, \\ j \neq i }} f(k[i=b_i]) \tau_{\theta}^{(i)}(k[i=b_i]) p_{\theta}(k[i=b_i]) \\
        & \qquad \qquad \qquad \qquad \qquad  -f(k[i=a_i]) \tau_{\theta}^{(i)} (k[i=a_i]) p_{\theta}(k[i=a_i]) \\
        &= 0,
    \end{align*}
    where $k[i=l]=(k_1,\ldots,k_{i-1},l,k_{i+1},\ldots,k_d)$ and $[\cdot]_i$ denotes the $i$th component of a vector. Let us consider the case where $-\infty = a_i < b_i = \infty$. Note that $\sum_{k \in U} \Vert  f(k)p_{\theta} (k) \tau_{\theta}(k) \Vert < \infty$ implies that $\lim_{k_i \rightarrow - \infty} f(k) \tau_{\theta}(k)p_{\theta}(k) = 0 $ and $ \lim_{k_i \rightarrow  \infty} f(k) \tau_{\theta}(k)p_{\theta}(k) = 0 $. Then
    \begin{align*}
        \mathbb{E}[[\mathcal{A}_{\theta} f(X)]_i ] &= \sum_{k \in U} \Delta_i^{+} (f(k) \tau_{\theta}^{(i)}(k) p_{\theta}(k)) \\
        &= \sum_{\substack{a_j \leq k_j \leq b_j, \\ j \neq i }} \sum_{k_i=-\infty}^{\infty} \Delta_i^{+} (f(k) \tau_{\theta}^{(i)}(k) p_{\theta}(k)) \\
        & = \sum_{\substack{a_j \leq k_j \leq b_j, \\ j \neq i }} \lim_{l \rightarrow \infty} f(k[i=l]) \tau_{\theta}^{(i)}(k[i=l]) p_{\theta}(k[i=l]) \\
        & \qquad \qquad \qquad \qquad \qquad  - \lim_{l \rightarrow -\infty} f(k[i=l]) \tau_{\theta}^{(i)} (k[i=l]) p_{\theta}(k[i=l]) \\
        &= 0,
    \end{align*}
    If only $a_i= -\infty$ or $b_i=\infty$, the same argument works. 
\end{proof}
In what follows we present two multivariate examples. We note that for both examples, the dimension of the parameter space $\Theta$ grows with $d$, whereby the Stein estimator is always available in closed-form, also in a high-dimensional setup.
\begin{Example} \label{example_diri_neb_mult}
    For the Dirichlet negative multinomial distribution $DNM(r,\alpha_0,\alpha)$ with $\theta=\alpha$ where $\alpha \in (0, \infty)^d$, $r \in \mathbb{N}_0$, $\alpha_0>0$ and $U= \mathbb{N}_0^d$ we have 
    \begin{align*}
        p_{\theta}(k) = \frac{B\big(r +\sum_{i=1}^d k_i, \alpha_0 +\sum_{i=1}^d \alpha_i\big)}{B(r,\alpha_0)} \prod_{i=1}^d \frac{\Gamma(k_i+\alpha_i)}{k_i! \Gamma(\alpha_i)}.
    \end{align*}
    Let $\tau_{\theta}^{(i)}(k) =k_i \big( \sum_{j=1}^d k_j -1 + r + \alpha_0 +\sum_{j=1}^d \alpha_j  \big)  $ for $i = 1\ldots, d$ and we get for the $i$th component of the Stein operator
    \begin{align*}
        [\mathcal{A}_{\theta} f(k)]_i =  \bigg(\sum_{j=1}^d k_j +r \bigg) (k_i+\alpha_i) \Diamond_i^{+} f(k) - k_i \bigg( \sum_{j=1}^d k_j - 1 + r + \alpha_0 +\sum_{j=1}^d \alpha_j  \bigg) f(k) 
    \end{align*}
    For known $r$ and $\alpha_0$ and an i.i.d.\ sample $X_1,\ldots,X_n \sim DNM(r,\alpha_0,\alpha^{\star})$ with $X_i = (X_i^{(1)}, \ldots, X_i^{(d)} )^{\top}$ this gives the Stein estimator $\hat{\alpha}_n = A_n^{-1} b_n$
    where
    \begin{align*}
        (A_n)_{i,j} = \mathbbm{1}{\{i=j\}} \bigg( \sum_{l=1}^d \overline{ X^{(l)}\Diamond_i^{+} f(X) } + r \overline{\Diamond_i^{+} f(X)} \bigg) -\overline{X^{(i)}f(X)}
    \end{align*}
    and 
    \begin{align*}
        (b_n)_{i} = \sum_{l=1}^d \overline{X^{(l)} X^{(i)} f(X) } + ( r+ \alpha_0 -1) \overline{X^{(i)} f(X) } - \sum_{l=1}^d \overline{ X^{(l)} X^{(i)} \Diamond_i^{+} f(X) } - r \overline{ X^{(i)} \Diamond_i^{+} f(X)},
    \end{align*}
    where $1 \leq i,j \leq d$. Simulation results can be found in Table \ref{dirinegmult_sim} where we consider the Stein estimator $\hat{\theta}_n^{\mathrm{ST}}$, the moment estimator $\hat{\theta}_n^{\mathrm{MO}}$ and the MLE $\hat{\theta}_n^{\mathrm{ML}}$. The moment estimator is only valid for $\alpha_0>1$ and is given by
    \begin{align*}
        \hat{\alpha}_n^{\mathrm{MO}} = \frac{(\alpha_0-1) \overline{X}}{r}.
    \end{align*}
    For the Stein estimator we used the test function
    \begin{align*}
        f(k) = \begin{cases} 0, & \text{if }  \min\{k_1, \ldots, k_d\}=0 \\ \big( \sum_{i=1}^{d} k_i \big)^{-1}, & \text{otherwise} \end{cases}.
    \end{align*}
    Regarding the MLE we used the Nelder-Mead algorithm as implemented in \texttt{optim} for numerical optimization with starting value $(1,\ldots,1)$ and considered the estimate as non-eligible if the runtime of the optimization algorithm exceeded $10$ seconds. As it can be observed in Table \ref{dirinegmult_sim}, for parameter constellation where the MLE and moment estimator produced eligible estimates, the MLE seems to achieve the smallest MSE and the moment estimator together with the MLE the smallest bias, with the Stein estimator still performing reasonably well. However, for several parameter values, the MLE did not return a single eligible estimate (this seems to be the case especially when $\alpha_0<1$) while the Stein estimator still produced consistent results. The high values in column NE for the MLE were mostly a consequence of a violation of the runtime constraint.
\end{Example}

\begin{table} 
\centering
\begin{tabular}{cc|ccc|ccc|ccc}
 $(r,\alpha_0,\alpha)$ & & \multicolumn{3}{c|}{Bias} & \multicolumn{3}{c|}{MSE} & \multicolumn{3}{c}{NE} \\ \hline
& & $\hat{\theta}_n^{\mathrm{ML}}$ & $\hat{\theta}_n^{\mathrm{MO}}$ & $\hat{\theta}_n^{\mathrm{ST}}$ & $\hat{\theta}_n^{\mathrm{ML}}$ & $\hat{\theta}_n^{\mathrm{MO}}$ & $\hat{\theta}_n^{\mathrm{ST}}$ & $\hat{\theta}_n^{\mathrm{ML}}$ & $\hat{\theta}_n^{\mathrm{MO}}$ & $\hat{\theta}_n^{\mathrm{ST}}$ \\ \hline
\multirow{3}{*}{$(5,0.5,(2,2,2))$} & $\alpha_1$  & -- & -- & 0.026 & -- & -- & 0.056 & \multirow{3}{*}{100} & \multirow{3}{*}{100} & \multirow{3}{*}{0} \\ & $\alpha_2$  & -- & -- & 0.026 & -- & -- & 0.057\\ & $\alpha_{3}$ & -- & -- & 0.027 & -- & -- & 0.057\\ \hline 
\multirow{3}{*}{$(5,2,(2,2,2))$} & $\alpha_1$  & -3.16\text{e-3} & 9.43\text{e-3} & 0.011 & 0.021 & 0.47 & 0.034 & \multirow{3}{*}{69} & \multirow{3}{*}{0} & \multirow{3}{*}{0} \\ & $\alpha_2$  & -2.82\text{e-3} & -7.5\text{e-4} & 9.83\text{e-3} & 0.021 & 0.23 & 0.034\\ & $\alpha_{3}$ & 7.3\text{e-4} & -1.3\text{e-3} & 0.012 & 0.021 & 0.221 & 0.034\\ \hline 
\multirow{3}{*}{$(10,3,(1,1,1))$} & $\alpha_1$  & 3.05\text{e-3} & 1.62\text{e-3} & 6.01\text{e-3} & 5.98\text{e-3} & 0.018 & 0.012 & \multirow{3}{*}{11} & \multirow{3}{*}{0} & \multirow{3}{*}{0} \\ & $\alpha_2$  & 3.29\text{e-3} & 5.07\text{e-4} & 5.56\text{e-3} & 5.98\text{e-3} & 0.018 & 0.012\\ & $\alpha_{3}$ & 2.43\text{e-3} & -1.35\text{e-4} & 6.35\text{e-3} & 6.09\text{e-3} & 0.018 & 0.012\\ \hline 
\multirow{3}{*}{$(2,0.2,(1,0.5,3))$} & $\alpha_1$  & -- & -- & 0.026 & -- & -- & 0.04 & \multirow{3}{*}{100} & \multirow{3}{*}{100} & \multirow{3}{*}{13} \\ & $\alpha_2$  & -- & -- & 0.011 & -- & -- & 9.75\text{e-3}\\ & $\alpha_{3}$ & -- & -- & 0.077 & -- & -- & 0.32\\ \hline 
\multirow{3}{*}{$(1,4,(1,1,1))$} & $\alpha_1$  & 6.85\text{e-3} & 2.27\text{e-3} & 0.103 & 0.029 & 0.04 & 0.382 & \multirow{3}{*}{0} & \multirow{3}{*}{0} & \multirow{3}{*}{0} \\ & $\alpha_2$  & 5.71\text{e-3} & 4.6\text{e-4} & 0.104 & 0.029 & 0.04 & 0.389\\ & $\alpha_{3}$ & 7.52\text{e-3} & 2.42\text{e-3} & 0.11 & 0.029 & 0.041 & 0.403\\ \hline 
\multirow{3}{*}{$(3,0.8,(0.5,0.5,2))$} & $\alpha_1$  & -- & -- & 7.02\text{e-3} & -- & -- & 7.14\text{e-3} & \multirow{3}{*}{100} & \multirow{3}{*}{100} & \multirow{3}{*}{0} \\ & $\alpha_2$  & -- & -- & 6.96\text{e-3} & -- & -- & 7.15\text{e-3}\\ & $\alpha_{3}$ & -- & -- & 0.031 & -- & -- & 0.083\\ \hline 
\multirow{3}{*}{$(4,5,(1,2,1))$} & $\alpha_1$  & 3.3\text{e-3} & 2.31\text{e-4} & 0.014 & 0.011 & 0.016 & 0.041 & \multirow{3}{*}{0} & \multirow{3}{*}{0} & \multirow{3}{*}{0} \\ & $\alpha_2$  & 6.15\text{e-3} & 10\text{e-4} & 0.03 & 0.028 & 0.04 & 0.129\\ & $\alpha_{3}$ & 3.6\text{e-3} & 1.64\text{e-3} & 0.013 & 0.012 & 0.017 & 0.042\\ \hline 
\multirow{3}{*}{$(2,2,(1,1,0.5))$} & $\alpha_1$  & 5.27\text{e-3} & 7.11\text{e-3} & 0.028 & 0.012 & 0.099 & 0.065 & \multirow{3}{*}{8} & \multirow{3}{*}{0} & \multirow{3}{*}{0} \\ & $\alpha_2$  & 6.26\text{e-3} & 8.05\text{e-3} & 0.031 & 0.012 & 0.1 & 0.066\\ & $\alpha_{3}$ & 2.05\text{e-3} & 2.17\text{e-3} & 0.013 & 4.41\text{e-3} & 0.029 & 0.018\\ \hline 
\multirow{4}{*}{$(2,0.5,(1,1,1,1))$} & $\alpha_1$  & -- & -- & 0.022 & -- & -- & 0.03 & \multirow{4}{*}{100} & \multirow{4}{*}{100} & \multirow{4}{*}{0} \\ & $\alpha_2$  & -- & -- & 0.022 & -- & -- & 0.03\\ & $\alpha_{3}$ & -- & -- & 0.022 & -- & -- & 0.03\\ & $\alpha_{4}$ & -- & -- & 0.023 & -- & -- & 0.03\\ \hline 
\multirow{4}{*}{$(8,3,(2,2,2,4))$} & $\alpha_1$  & -5.12\text{e-3} & 1.22\text{e-3} & 8.55\text{e-3} & 0.023 & 0.047 & 0.026 & \multirow{4}{*}{41} & \multirow{4}{*}{0} & \multirow{4}{*}{0} \\ & $\alpha_2$  & -6.92\text{e-3} & -1.06\text{e-3} & 7.57\text{e-3} & 0.024 & 0.047 & 0.026\\ & $\alpha_{3}$ & -7.96\text{e-3} & -6.94\text{e-4} & 5.87\text{e-3} & 0.024 & 0.049 & 0.027\\ & $\alpha_{4}$ & -0.034 & -2.98\text{e-3} & 0.013 & 0.109 & 0.145 & 0.078\\ \hline 
\end{tabular} 
\caption{\protect\label{dirinegmult_sim} Simulation results for the $DNM(r,\alpha_0,\alpha)$ distribution for $n=200$ and $10{,}000$ repetitions.}
\end{table}

\begin{Example} \label{example_truncated_multneg}
    For the negative multinomial distribution $NM(r,p)$ with $\theta=p$ where $p \in (0,1)^d$ such that $p_0 = 1-p_1- \ldots p_d > 0$ and $r > 0$ on $U= \mathbb{N}_0^d$ we have 
    \begin{align*}
        p_{\theta}(k) = \Gamma\bigg( r + \sum_{i=1}^d k_i \bigg) \frac{p_0^r}{\Gamma(r)} \prod_{i=1}^d \frac{p_i^{k_i}}{k_i!}.
    \end{align*}
    Let $\tau_{\theta}^{(i)}(k) = k_i $ for $i = 1\ldots, d$ and we get for the $i$th component of the Stein operator
    \begin{align*}
        [ \mathcal{A}_{\theta} f(k) ]_i =  \bigg(\sum_{j=1}^d k_j +r \bigg) p_i \Diamond_i^{+} f(k) - k_i f(k).
    \end{align*}
    For known $r$ and an i.i.d.\ sample $X_1,\ldots,X_n \sim NM(r,p^{\star})$ with $X_i = (X_i^{(1)}, \ldots, X_i^{(d)} )^{\top}$ this gives the Stein estimator 
    \begin{align*}
        (\hat{p}_n)_{i} = \overline{X^{(i)}f(X)} \bigg( \sum_{l=1}^d \overline{ X^{(l)}\Diamond_i^{+} f(X) } + r \overline{\Diamond_i^{+} f(X)} \bigg)^{-1},
    \end{align*}
    where $1 \leq i \leq d$. Here we consider the distribution in a truncated setting, i.e. $TNM(r,p,a,b)$ with $r > 0$, $a=(a_1,\ldots,a_d)$, $b=(b_1,\ldots,b_d)$, $\theta=p$ where $p \in (0,1)^d$ such that $p_0 = 1-p_1- \ldots p_d > 0$ and $r > 0$ with support $U= \{a_1,\dots,b_1\} \times \ldots \times \{a_d,\dots,b_d\}$. The probability mass function is given by 
    \begin{align*}
        p_{\theta}(k) = \bigg(\sum_{k' \in U} \Gamma\bigg( r + \sum_{i=1}^d k_i' \bigg)  \prod_{i=1}^d \frac{p_i^{k_i'}}{k_i'!} \bigg)^{-1} \Gamma\bigg( r + \sum_{i=1}^d k_i \bigg)  \prod_{i=1}^d \frac{p_i^{k_i}}{k_i!}.
    \end{align*}  
    Stein operator and estimator are defined as above in untruncated case. Simulation results can be found in Table \ref{truncnegmult_sim} for the Stein estimator $\hat{\theta}_n^{\mathrm{ST}}$ with test function
    \begin{align*}
        f(k) = \begin{cases} 0, & \text{if } k_i=a_i \text{ or } k_i=b_i \text{ for any } i=1,\ldots,d \\ \sum_{i=1}^{d} k_i, & \text{otherwise} \end{cases}
    \end{align*}
    and the MLE $\hat{\theta}_n^{\mathrm{ML}}$ for different parameter constellations and truncation domains. For the computation of the MLE, we used again the Nelder-Mead algorithm with a runtime constraint of $10$ seconds, with starting value $(1/(d+1),\dots,1/(d+1))$. Both estimators performed well whenever estimates were eligible. Nonetheless, for many parameter constellations, the numerical optimization of the MLE encountered substantial difficulties in producing a valid estimate within the prescribed runtime limit while the Stein estimator returned an eligible value for most Monte Carlo repetitions.
\end{Example}

\begin{table} 
\centering
\begin{tabular}{cc|cc|cc|cc}
 $(r,p,a,b)$ & & \multicolumn{2}{c|}{Bias} & \multicolumn{2}{c|}{MSE} & \multicolumn{2}{c}{NE} \\ \hline
& & $\hat{\theta}_n^{\mathrm{ML}}$ & $\hat{\theta}_n^{\mathrm{ST}}$ & $\hat{\theta}_n^{\mathrm{ML}}$ & $\hat{\theta}_n^{\mathrm{ST}}$ & $\hat{\theta}_n^{\mathrm{ML}}$ & $\hat{\theta}_n^{\mathrm{ST}}$ \\ \hline
\multirowcell{3}{$(5,(0.3,0.1,0.2) $, \\ $(0,0,0),(10,10,10))$} & $p_1$  & -- & 3.1\text{e-4} & -- & 7.35\text{e-4} & \multirow{3}{*}{100} & \multirow{3}{*}{0} \\ & $p_2$  & -- & -2.07\text{e-4} & -- & 9.95\text{e-5}\\ & $p_{3}$ & -- & -1.96\text{e-4} & -- & 2.9\text{e-4}\\ \hline 
\multirowcell{3}{$(5,(0.1,0.1,0.1) $, \\ $(1,1,2),(5,5,5))$} & $p_1$  & 4.04\text{e-6} & 1.24\text{e-3} & 1.33\text{e-4} & 1.32\text{e-3} & \multirow{3}{*}{5} & \multirow{3}{*}{0} \\ & $p_2$  & 5.85\text{e-5} & 7.39\text{e-4} & 1.35\text{e-4} & 1.3\text{e-3}\\ & $p_{3}$ & -3.22\text{e-4} & 2.14\text{e-3} & 2.14\text{e-4} & 1.57\text{e-3}\\ \hline 
\multirowcell{3}{$(7,(0.2,0.1,0.1) $, \\ $(0,0,0),(5,5,5))$} & $p_1$  & -9.84\text{e-3} & 2.33\text{e-3} & 1.97\text{e-4} & 1.26\text{e-3} & \multirow{3}{*}{99} & \multirow{3}{*}{0} \\ & $p_2$  & 4.06\text{e-3} & 3.24\text{e-5} & 5.06\text{e-5} & 2.35\text{e-4}\\ & $p_{3}$ & 5\text{e-3} & 1.89\text{e-4} & 5.09\text{e-5} & 2.33\text{e-4}\\ \hline 
\multirowcell{3}{$(2,(0.8,0.1,0.05) $, \\ $(0,0,2),(7,8,5))$} & $p_1$  & -- & -0.139 & -- & 0.05 & \multirow{3}{*}{100} & \multirow{3}{*}{33} \\ & $p_2$  & -- & -5.82\text{e-4} & -- & 8.77\text{e-4}\\ & $p_{3}$ & -- & 3.97\text{e-4} & -- & 3.34\text{e-4}\\ \hline 
\multirowcell{3}{$(5,(0.5,0.1,0.1) $, \\ $(2,1,1),(5,5,5))$} & $p_1$  & -1.86\text{e-5} & 0.01 & 2.38\text{e-3} & 0.032 & \multirow{3}{*}{23} & \multirow{3}{*}{8} \\ & $p_2$  & -2.92\text{e-4} & 8.35\text{e-4} & 1.27\text{e-4} & 7\text{e-4}\\ & $p_{3}$ & -4.12\text{e-4} & 4.08\text{e-4} & 1.29\text{e-4} & 6.73\text{e-4}\\ \hline 
\multirowcell{3}{$(3,(0.3,0.3,0.3) $, \\ $(0,0,0),(10,5,5))$} & $p_1$  & -- & -2.95\text{e-4} & -- & 6.33\text{e-4} & \multirow{3}{*}{100} & \multirow{3}{*}{0} \\ & $p_2$  & -- & 4.52\text{e-3} & -- & 2.41\text{e-3}\\ & $p_{3}$ & -- & 4.09\text{e-3} & -- & 2.38\text{e-3}\\ \hline 
\multirowcell{3}{$(5,(0.2,0.3,0.4) $, \\ $(1,1,1),(9,9,9))$} & $p_1$  & -- & 3.32\text{e-4} & -- & 2.65\text{e-4} & \multirow{3}{*}{100} & \multirow{3}{*}{0} \\ & $p_2$  & -- & 1.16\text{e-3} & -- & 8.94\text{e-4}\\ & $p_{3}$ & -- & 4.07\text{e-3} & -- & 2.23\text{e-3}\\ \hline 
\multirowcell{3}{$(1,(0.1,0.1,0.4) $, \\ $(0,0,0),(10,10,7))$} & $p_1$  & -- & -3.85\text{e-3} & -- & 1.39\text{e-3} & \multirow{3}{*}{100} & \multirow{3}{*}{3} \\ & $p_2$  & -- & -3.31\text{e-3} & -- & 1.4\text{e-3}\\ & $p_{3}$ & -- & -0.014 & -- & 0.024\\ \hline 
\multirowcell{3}{$(1,(0.4,0.5,0.05) $, \\ $(2,3,2),(10,10,10))$} & $p_1$  & -- & 5.78\text{e-3} & -- & 5.71\text{e-3} & \multirow{3}{*}{100} & \multirow{3}{*}{0} \\ & $p_2$  & -- & 0.012 & -- & 0.011\\ & $p_{3}$ & -- & 1.39\text{e-4} & -- & 1.14\text{e-4}\\ \hline 
\multirowcell{4}{$(1,(0.4,0.05,0.2,0.01) $, \\ $(1,1,0,0),(10,11,11,12))$} & $p_1$  & -- & -0.05 & -- & 0.013 & \multirow{4}{*}{100} & \multirow{4}{*}{46} \\ & $p_2$  & -- & 0.03 & -- & 2.86\text{e-3}\\ & $p_{3}$ & -- & -0.011 & -- & 6.81\text{e-3}\\ & $p_{4}$ & -- & 6.65\text{e-3} & -- & 1.38\text{e-4}\\ \hline 
\end{tabular}
\caption{\protect\label{truncnegmult_sim} Simulation results for the $TNM(r,p,a,b)$ distribution for $n=100$ and $10{,}000$ repetitions.}
\end{table}

\section{Unknown truncation domain} \label{section_unknwon_truncation}
In this section we investigate truncated distributions from a more theoretical point of view and show that the asymptotic variance is not affected if the domain of the random variable has to be estimated. We state the results for univariate distributions $\, _a^b\mathbb{P}_{\theta}$ with support $U=\{a,\ldots,b\}$ where $-\infty < a<b < \infty$. However, a generalization of the results presented in this section to multivariate distributions with a rectangular support, i.e. $U=\{a_1,\ldots,b_1\} \times \ldots, \times \{a_d,\ldots,b_d\}$ is straightforward. Let $X_1,\ldots,X_n \sim \, _{a^{\star}}^{b^{\star}}\mathbb{P}_{\theta^{\star}}$ be an i.i.d.\ sample on a common probability space $(\Omega,\mathcal{F},\mathbb{P})$. Here we wrote again $\theta^{\star}$, $a^{\star}$ and $b^{\star}$ for the true parameters. As per $a^{\star}$ and $b^{\star}$ we employ the standard estimators
\begin{align*}
    \hat{a}_n= \min\{X_1,\ldots,X_n \}, \qquad \hat{b}_n= \max\{X_1,\ldots,X_n \}.
\end{align*}
We have the following asymptotic result for $\hat{a}_n$ and $\hat{b}_n$.
\begin{Lemma} \label{lemma_conv_maxmin}
    We have for any sequence $q_n$, $n\in \mathbb{N}_0$ that
    \begin{align*}
        q_n (\hat{a}_n-a^{\star}) \xrightarrow{\text{a.s.}} 0 \qquad \text{and} \qquad q_n (\hat{b}_n-b^{\star}) \xrightarrow{\text{a.s.}} 0
    \end{align*}
    as $n \rightarrow \infty$, where $\xrightarrow{\text{a.s.}}$ denotes convergence almost surely.
\end{Lemma}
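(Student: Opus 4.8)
The plan is to exploit the fact that, since the support $U = \{a^\star,\ldots,b^\star\}$ consists of finitely many integers and $a^\star,b^\star \in U$, the probabilities $p_{\theta^\star}(a^\star)$ and $p_{\theta^\star}(b^\star)$ are both strictly positive, and that the estimators $\hat a_n$, $\hat b_n$ are integer-valued with $\hat a_n \ge a^\star$ and $\hat b_n \le b^\star$ almost surely. The main observation is that $\hat a_n$ actually hits $a^\star$ eventually: because the $X_i$ are i.i.d.,
\begin{align*}
    \mathbb{P}(\hat a_n \neq a^\star) = \mathbb{P}(\hat a_n > a^\star) = \mathbb{P}(X_1 > a^\star, \ldots, X_n > a^\star) = \big(1 - p_{\theta^\star}(a^\star)\big)^n,
\end{align*}
and since $p_{\theta^\star}(a^\star) > 0$ this is a summable sequence in $n$.

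First I would invoke the (first) Borel--Cantelli lemma to conclude from $\sum_{n} \big(1 - p_{\theta^\star}(a^\star)\big)^n < \infty$ that the event $\{\hat a_n > a^\star\}$ occurs for only finitely many $n$ almost surely. Hence there is a (random) index $N$ with $\hat a_n = a^\star$ for all $n \ge N$, almost surely. Consequently $q_n(\hat a_n - a^\star) = 0$ for all $n \ge N$, so the sequence is eventually identically zero on a set of probability one, and therefore converges to $0$ almost surely, \emph{regardless} of the sequence $q_n$ -- which is exactly the point of stating the result for arbitrary $q_n$.

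The argument for $\hat b_n$ is entirely symmetric: $\mathbb{P}(\hat b_n \neq b^\star) = \big(1 - p_{\theta^\star}(b^\star)\big)^n$ is summable, Borel--Cantelli gives $\hat b_n = b^\star$ for all large $n$ almost surely, and hence $q_n(\hat b_n - b^\star) \to 0$ almost surely. One can also phrase both cases at once by noting that $\mathbb{P}(\hat a_n \neq a^\star \text{ or } \hat b_n \neq b^\star) \le \big(1 - p_{\theta^\star}(a^\star)\big)^n + \big(1 - p_{\theta^\star}(b^\star)\big)^n$ is summable.

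There is essentially no hard step here; the only thing to be careful about is recording why $p_{\theta^\star}(a^\star) > 0$ and $p_{\theta^\star}(b^\star) > 0$ (namely that $a^\star$ and $b^\star$ are the genuine endpoints of the finite support, so they carry positive mass), and emphasising that the conclusion holds for any $q_n$ precisely because the relevant differences are not merely small but \emph{exactly zero} for $n$ large. If one wanted the multivariate extension mentioned in the text, the same Borel--Cantelli argument applies coordinatewise to $\hat a_n^{(i)} = \min_j X_j^{(i)}$ and $\hat b_n^{(i)} = \max_j X_j^{(i)}$, using that each marginal endpoint carries positive mass.
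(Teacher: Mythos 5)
Your proposal is correct and rests on the same key computation as the paper, namely $\mathbb{P}(\hat a_n \neq a^\star) = (1-p_{\theta^\star}(a^\star))^n$ and its analogue for $\hat b_n$. The only difference is in the finishing step: the paper deduces convergence in probability and then appeals to the fact that $\hat a_n$ takes values in a finite set, whereas you invoke Borel--Cantelli to get that $\hat a_n = a^\star$ eventually almost surely -- an equally valid and arguably more explicit way to close the argument.
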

\begin{proof}
    We only do the computations for $\hat{a}_n$. We have
    \begin{align*}
        \mathbb{P}(\vert q_n (\hat{a}_n-a^{\star}) \vert >0) &= \mathbb{P}(\{X_1 \neq a^{\star} \} \cap \ldots \cap \{X_n \neq a^{\star} \}) \\
        &= (1-p_{\theta}(a^{\star}))^n \rightarrow 0,
    \end{align*}
    as $n \rightarrow \infty$ and therefore $q_n (\hat{a}_n-a^{\star}) \xrightarrow{\mathbb{P}} 0$ as $n \rightarrow \infty$. Since $\hat{a}_n$ and $\hat{b}_n$ take values in a finite set, almost sure convergence follows.
\end{proof}
The consistency of $\hat{a}_n$ and $\hat{b}_n$ is an immediate consequence of the previous lemma. \par

Let us move on to the estimation of $\theta^{\star}$. Note that (in contrast to the Stein operator \eqref{def_stein_op}) the test functions do depend on $a$ and $b$ as to why we write $(f_{a}^{b})_1,\ldots,(f_{a}^{b})_q$ and $\, _{a}^{b}\mathscr{F}_{\theta}$ as well as $\, _{a}^{b}\mathscr{F} = \cap_{\theta \in \Theta} \, _{a}^{b}\mathscr{F}_{\theta} $. With our estimates $\hat{a}_n$ and $\hat{b}_n$ at hand we choose test functions $(f_{\hat{a}_n}^{\hat{b}_n})_1,\ldots,(f_{\hat{a}_n}^{\hat{b}_n})_q \in \, _{\hat{a}_n}^{\hat{b}_n}\mathscr{F}$ and define the Stein estimator $\hat{\theta}_n$ as the solution to
\begin{gather*} 
    \frac{1}{n} \sum_{i=1}^n \mathcal{A}_{\theta}(f_{\hat{a}_n}^{\hat{b}_n})_1(X_i) = 0, \\
    \vdots \\
    \frac{1}{n} \sum_{i=1}^n \mathcal{A}_{\theta}(f_{\hat{a}_n}^{\hat{b}_n})_q(X_i) = 0.
\end{gather*}
In the next theorem, we show that $\hat{\theta}_n$ as defined above exists and is measurable with probability converging to $1$. For the latter purpose we need some technical assumptions which resemble a lot the ones made in \cite[Assumption 2.2]{ebner2025stein}.

\begin{Assumption} \label{assumptions_unknown_trunc} \leavevmode
\begin{description} 
\item[(a)] Let $X \sim \, _{a^{\star}}^{b^{\star}}\mathbb{P}_{\theta^{\star}}$ and $\theta \in \Theta$.  Then $f_{a^{\star}}^{b^{\star}} \in \, _{a^{\star}}^{b^{\star}}\mathscr{F}$, and $\mathbb{E}[\mathcal{A}_{\theta}f_{a^{\star}}^{b^{\star}}(X)]=0$ if and only if $\theta=\theta^{\star}$. Moreover suppose that $\mathbb{E}[\Vert M_{a'}^{b'}(X) \Vert^2 ] < \infty$ for all $a^{\star} \leq a' \leq b' \leq b^{\star}$.
\item[(b)] For $\tilde{q} \geq q$, we can write $\mathcal{A}_{\theta}f_a^b(k) =M_a^b(k)g(\theta)$ for some measurable $q \times \tilde{q}$ matrix $M_a^b$ and a continuously differentiable function $g=(g_1,\ldots, g_{\tilde{q}})^\top:\Theta \rightarrow \mathbb{R}^q $ for all $\theta \in \Theta$, $k \in \{a, \ldots, b\}$. Moreover, we assume that $\mathbb{E}[M_{a^{\star}}^{b^{\star}}(X)]\frac{\partial}{\partial \theta} g(\theta) \vert_{\theta=\theta^{\star}}$, where $X \sim \, _{a^{\star}}^{b^{\star}}\mathbb{P}_{\theta^{\star}}$ is invertible.
\end{description}
\end{Assumption}

We then have the following theorem.
\begin{Theorem} \label{theorem_consistency_unknown_trunc}
   Suppose  Assumptions \ref{assumptions_unknown_trunc}(a)--(b)
   are fulfilled. The probability that $\hat{\theta}_n$ exists and is measurable converges to $1$ as $n \rightarrow \infty$.
\end{Theorem}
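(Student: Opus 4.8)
The plan is to reduce the statement, on an event of probability tending to one, to the situation with a \emph{known} truncation domain, for which existence and measurability of the Stein estimator is the discrete counterpart of \cite[Theorem 2.2]{ebner2025stein} under Assumption \ref{assumptions_unknown_trunc}(a)--(b) (the discrete analogue of \cite[Assumption 2.1]{ebner2025stein}; compare Remark \ref{remark_asymptotic_normality}). First I would observe that, since $X_1,\ldots,X_n$ take values in the finite set $\{a^\star,\ldots,b^\star\}$, we have $\hat a_n,\hat b_n\in\{a^\star,\ldots,b^\star\}$ with $\hat a_n\ge a^\star$, $\hat b_n\le b^\star$, and that the set
\[
    E_n=\{\hat a_n=a^\star\}\cap\{\hat b_n=b^\star\}
\]
is measurable with $\mathbb{P}(E_n^c)\le(1-p_{\theta^\star}(a^\star))^n+(1-p_{\theta^\star}(b^\star))^n\to0$, by the computation in the proof of Lemma \ref{lemma_conv_maxmin}. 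It therefore suffices to construct a measurable map $\hat\theta_n$ on $\Omega$ which, on an event $G_n$ with $\mathbb{P}(G_n)\to1$, solves the defining system; then $\mathbb{P}(\hat\theta_n\text{ solves its defining system})\ge1-\mathbb{P}(E_n^c)-\mathbb{P}(G_n^c)\to1$.

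On $E_n$ the data--dependent test functions $(f_{\hat a_n}^{\hat b_n})_1,\ldots,(f_{\hat a_n}^{\hat b_n})_q$ coincide with the deterministic functions $(f_{a^\star}^{b^\star})_1,\ldots,(f_{a^\star}^{b^\star})_q\in\, _{a^\star}^{b^\star}\mathscr{F}$, and, using the factorization $\mathcal{A}_\theta f_{a^\star}^{b^\star}(k)=M_{a^\star}^{b^\star}(k)g(\theta)$ from Assumption \ref{assumptions_unknown_trunc}(b), the system reduces to $\bar{M}_n\,g(\theta)=0$ with $\bar{M}_n=\frac{1}{n}\sum_{i=1}^nM_{a^\star}^{b^\star}(X_i)$. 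By Assumption \ref{assumptions_unknown_trunc}(a) the law of large numbers gives $\bar{M}_n\to\mathbb{E}[M_{a^\star}^{b^\star}(X)]$ almost surely; since $\mathbb{E}[\mathcal{A}_{\theta^\star}f_{a^\star}^{b^\star}(X)]=0$ by Theorem \ref{theorem_char_stein_uni}, the map $\theta\mapsto\mathbb{E}[M_{a^\star}^{b^\star}(X)]g(\theta)$ vanishes at $\theta^\star$ with invertible Jacobian $\mathbb{E}[M_{a^\star}^{b^\star}(X)]\frac{\partial}{\partial\theta}g(\theta)\vert_{\theta=\theta^\star}$, so the inverse/implicit--function argument behind \cite[Theorem 2.2]{ebner2025stein} produces, with probability tending to one, a zero of $\theta\mapsto\bar{M}_n g(\theta)$ in a neighbourhood of $\theta^\star$ that depends measurably on the data. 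I would let $\hat\theta_n$ equal this measurable selection on $E_n$; on each of the finitely many remaining cells $\{\hat a_n=a',\hat b_n=b'\}$ with $(a',b')\neq(a^\star,b^\star)$ I would take any measurable solution of the corresponding deterministic system when one exists and a fixed default otherwise. Gluing finitely many measurable maps over a measurable partition yields a measurable $\hat\theta_n$, and on $E_n\cap G_n$ it solves the defining system, which finishes the argument.

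The step I expect to be the main obstacle is the measurability bookkeeping rather than the analytic core: because the test functions entering the estimating equations are themselves data--dependent, one cannot invoke a measurable--selection statement for a fixed parametrized family directly. What rescues the argument is the finiteness of the range of $(\hat a_n,\hat b_n)$, which lets one pass to a measurable partition of $\Omega$ on whose cells the test functions are deterministic, so the selection devices from \cite{ebner2025stein} apply cell by cell. A minor related point to verify is that the identification and invertibility hypotheses of Assumption \ref{assumptions_unknown_trunc} are only needed on the cell $(a',b')=(a^\star,b^\star)$ --- which asymptotically carries all the probability --- whereas on the other cells one merely needs \emph{some} measurable assignment, not genuine solvability of the Stein system.
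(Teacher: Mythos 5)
Your proposal is correct, and its analytic core is the same as the paper's: the implicit function theorem applied to $F(M,\theta)=Mg(\theta)$ at the population point $\big(\mathbb{E}[M_{a^{\star}}^{b^{\star}}(X)],\theta^{\star}\big)$, whose hypotheses are exactly Assumption \ref{assumptions_unknown_trunc}(a)--(b). Where you differ is in how the randomness of $(\hat{a}_n,\hat{b}_n)$ is absorbed. The paper does not condition on exact recovery of the endpoints: it works directly with the data--dependent matrices $M_{\hat{a}_n}^{\hat{b}_n}$, shows $n^{-1}\sum_{i=1}^n M_{\hat{a}_n}^{\hat{b}_n}(X_i)\xrightarrow{\mathbb{P}}\mathbb{E}[M_{a^{\star}}^{b^{\star}}(X)]$ by bounding the deviation through a maximum over the finitely many cells $a^{\star}\le a'\le b'\le b^{\star}$ (hence the second--moment condition on all $M_{a'}^{b'}$ in Assumption \ref{assumptions_unknown_trunc}(a)), and then sets $\hat{\theta}_n=h\big(n^{-1}\sum_i M_{\hat{a}_n}^{\hat{b}_n}(X_i)\big)$ on the event that this average lands in the implicit--function neighbourhood, which settles measurability in one stroke as a composition of the measurable average with the continuous $h$. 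You instead restrict to the event $E_n=\{\hat{a}_n=a^{\star},\hat{b}_n=b^{\star}\}$, whose probability tends to one by the geometric bound, so that on $E_n$ the problem collapses to the known--truncation case, and you then glue measurable selections over the finite partition by the values of $(\hat{a}_n,\hat{b}_n)$. Your reduction is arguably cleaner for this particular theorem (it only ever needs the hypotheses on the cell $(a^{\star},b^{\star})$ and sidesteps the convergence of $\mathbb{E}[M_{\hat{a}_n}^{\hat{b}_n}(X)]$), at the cost of slightly more explicit measurability bookkeeping; the paper's uniform--over--cells device is the one that is actually reused later in the proof of Theorem \ref{theorem_asymnorm_unknown_trunc}, where conditioning on $E_n$ alone would not suffice to control the discrete Taylor remainders at the $\sqrt{n}$ scale.
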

\begin{proof}
Let $X \sim \, _{a^{\star}}^{b^{\star}}\mathbb{P}_{\theta^{\star}}$. Define the function $F(M,\theta)=Mg(\theta)$, where $M \in \mathbb{R}^{q \times \tilde{q}}$ and $g$ is defined as in Assumption \ref{assumptions_unknown_trunc}(b). Then $F$ is continuously differentiable on $\mathbb{R}^{q \times \tilde{q}} \times \Theta$ and we have $F(\mathbb{E}[M_{a^{\star}}^{b^{\star}}(X)],\theta^{\star})=0$ by Assumption \ref{assumptions_unknown_trunc}(a), where $M_a^b(x)$ is defined as in Assumption \ref{assumptions_unknown_trunc}(b). The implicit function theorem implies that there are neighborhoods $U \subset \mathbb{R}^{q \times \tilde{q}},$ $V \subset \mathbb{R}^q$ of $\mathbb{E}[M_{a^{\star}}^{b^{\star}}(X)]$ and $\theta^{\star}$ such that there is a continuously differentiable function $h:U \rightarrow V$ with $F(M,h(M))=0$ for all $M \in U$. Now, for any $\epsilon >0$,
\begin{align*}
    &\mathbb{P} \bigg(\bigg\Vert\frac{1}{n}\sum_{i=1}^n M_{\hat{a}_n}^{\hat{b}_n}(X_i) - \mathbb{E}[M_{\hat{a}_n}^{\hat{b}_n}(X)] \bigg\Vert > \epsilon \bigg) \\
    &\leq \mathbb{P} \bigg(\max_{a^{\star} \leq a' \leq b' \leq b^{\star}} \bigg\Vert \frac{1}{n} \sum_{i=1}^n M_{a'}^{b'}(X_i) - \mathbb{E}[M_{a'}^{b'}(X)] \bigg\Vert > \epsilon  \bigg),
\end{align*}
where the expectation in the first term is with respect to $X$ and the last term converges to $0$ as we have a maximum over a finite set. Moreover, we know that $\mathbb{E}[M_{\hat{a}_n}^{\hat{b}_n}(X)] \xrightarrow{\mathbb{P}} \mathbb{E}[M_{a^{\star}}^{b^{\star}}(X)]$, where the first expectation is taken with respect to $X$. Hence, we can conclude that $n^{-1}\sum_{i=1}^n M_{\hat{a}_n}^{\hat{b}_n}(X_i)$ is converging to $\mathbb{E}[M_{a^{\star}}^{b^{\star}}(X)]$ in probability. Thus, by defining $A_n=\big\{ n^{-1} \sum_{i=1}^n M_{\hat{a}_n}^{\hat{b}_n}(X_i) \in U \big\}$, we have $\mathbb{P}( A_n) \rightarrow 1$ as $n \rightarrow \infty$. Therefore, we have shown existence and  measurability for each $\omega \in A_n$ with $\mathbb{P}(A_n) \rightarrow 1$, since we have  $\hat{\theta}_n=h\big(n^{-1} \sum_{i=1}^n M_{\hat{a}_n}^{\hat{b}_n}(X_i)\big)$. 
\end{proof}

From the proof of Theorem \ref{theorem_consistency_unknown_trunc} and the strong law of large numbers it also follows that the sequence $\{\hat{\theta}_n \, \vert \, n \in \mathbb{N} \}$ is consistent for $\theta^{\star}$.

We make use of the following discrete Taylor expansion formula \cite[Theorem 1.8.5]{agarwal2000difference}. For any function $g$ defined on $\{\tilde{k},\tilde{k}+1, \ldots  \}$ we have
\begin{align} \label{discrete_taylor_lower}
    g(k)=\sum_{i=0}^{n-1} \frac{(k-\tilde{k})^{(i)}}{i!} (\Delta^+)^i g(\tilde{k}) + \frac{1}{(n-1)!} \sum_{i=\tilde{k}}^{k-n} (k-i-1)^{(n-1)} (\Delta^+)^n g(i),
\end{align}
where $(\Delta^+)^n g(k)=\Delta^+ ((\Delta^+)^{n-1} g(k))$ with the convention $(\Delta^+)^0 g(k)=g(k)$ and $(k)^{(m)}= \prod_{i=0}^{m-1} (k-i) $ for all $k \geq a$ with the convention $(k)^{(0)}=1$ and $n \geq 1$. By rearranging the elements of $\{\tilde{k},\tilde{k}+1,\ldots\}$ we have that for a function $g$ defined on $\{\ldots,\tilde{k}-1,\tilde{k}\}$ that
\begin{align} \label{discrete_taylor_upper}
    g(k)=\sum_{i=0}^{n-1} (-1)^{i} \frac{(\tilde{k}-k)^{(i)}}{i!} (\Delta^-)^i g(\tilde{k}) + \frac{(-1)^{n}}{(n-1)!} \sum_{i=k+n}^{\tilde{k}} (i-k-1)^{(n-1)} (\Delta^-)^n g(i).
\end{align}
We note that the sums in formulas \eqref{discrete_taylor_lower} and \eqref{discrete_taylor_upper} are equal to $0$ when the upper bound for the iterator is smaller than the lower bound. Now we are able to show that the asymptotic variance of the Stein estimator does not change when $a^{\star}$ and $b^{\star}$ are estimated.
\begin{Theorem} \label{theorem_asymnorm_unknown_trunc}
    Let $X \sim \, _{a^{\star}}^{b^{\star}}\mathbb{P}_{\theta^{\star}}$. Suppose that Assumption \ref{assumptions_unknown_trunc}(a)--(b) hold and that
    \begin{align*}
        \bigg\Vert \frac{\partial^2}{\partial \theta_j \partial \theta_l} \mathcal{A}_{\theta}f_{a'}^{b'} (k) \Big\vert_{\theta=\theta'} \bigg\Vert \leq C
    \end{align*}
    for all $\theta' \in \Theta$, $a^{\star} \leq k \leq b^{\star}$, $a^{\star} \leq a' \leq b' \leq b^{\star}$ and $1 \leq j,l \leq q$ for some $C>0$. Then we have that
    \begin{align*}
        \sqrt{n} (\hat{\theta}_n - \theta^{\star} ) \xrightarrow{D} N(0,\Sigma),
    \end{align*}
    where
    \begin{align*}
        \Sigma =  \mathbb{E}\bigg[ \frac{\partial}{\partial \theta} \mathcal{A}_{\theta} f_{a^{\star}}^{b^{\star}} \Big\vert_{\theta = \theta^{\star}} \bigg]^{-1} \mathbb{E}\bigg[ \big( \mathcal{A}_{\theta^{\star}} f_{a^{\star}}^{b^{\star}}(X) \big)^2 \bigg] \mathbb{E}\bigg[ \frac{\partial}{\partial \theta} \mathcal{A}_{\theta} f_{a^{\star}}^{b^{\star}}(X) \Big\vert_{\theta = \theta^{\star}} \bigg]^{-\top}.
    \end{align*}
\end{Theorem}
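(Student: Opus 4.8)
The plan is to run the standard M-estimator expansion for the estimating map $\Psi_n(\theta,a,b):=\tfrac1n\sum_{i=1}^n\mathcal{A}_\theta f_a^b(X_i)\in\mathbb{R}^q$ (the $q$ test functions stacked), treating the estimated endpoints $(\hat a_n,\hat b_n)$ as a nuisance that, by Lemma \ref{lemma_conv_maxmin}, converges to $(a^\star,b^\star)$ faster than the $\sqrt n$ rate and so leaves the limit law untouched. The structural fact behind everything is that, since $p_{\theta^\star}(a^\star)>0$ and $p_{\theta^\star}(b^\star)>0$, the computation in the proof of Lemma \ref{lemma_conv_maxmin} gives $\mathbb{P}(\hat a_n\neq a^\star)=(1-p_{\theta^\star}(a^\star))^n\to0$ and similarly for $\hat b_n$; hence on the event $E_n:=\{\hat a_n=a^\star,\ \hat b_n=b^\star\}$, whose probability tends to $1$, the Stein estimator $\hat\theta_n$ solves verbatim the system driven by the fixed oracle test functions $f_{a^\star}^{b^\star}$. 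As a preliminary I would invoke Theorem \ref{theorem_consistency_unknown_trunc} and the consistency statement following it, so that $\hat\theta_n$ exists with probability tending to $1$ and is consistent for $\theta^\star$, and recall that $\hat a_n,\hat b_n$ are consistent.

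Next I would expand $\Psi_n(\cdot,\hat a_n,\hat b_n)$ in $\theta$ around $\theta^\star$ at $\hat\theta_n$: writing $G_n=\int_0^1\frac{\partial}{\partial\theta}\Psi_n(\theta^\star+t(\hat\theta_n-\theta^\star),\hat a_n,\hat b_n)\,dt$ (legitimate since $g$ is $C^1$) one has $0=\Psi_n(\hat\theta_n,\hat a_n,\hat b_n)=\Psi_n(\theta^\star,\hat a_n,\hat b_n)+G_n(\hat\theta_n-\theta^\star)$. The uniform bound $\bigl\|\tfrac{\partial^2}{\partial\theta_j\partial\theta_l}\mathcal{A}_\theta f_{a'}^{b'}(k)\bigr\|\le C$ over $k\in\{a^\star,\dots,b^\star\}$, over the finite index set $a^\star\le a'\le b'\le b^\star$, and over $\theta$ near $\theta^\star$, combined with the consistency of $\hat\theta_n$, lets me replace the intermediate points by $\theta^\star$ up to $o_{\mathbb{P}}(1)$; on $E_n$ this gives $G_n=\tfrac1n\sum_i\frac{\partial}{\partial\theta}\mathcal{A}_\theta f_{a^\star}^{b^\star}(X_i)\big|_{\theta^\star}+o_{\mathbb{P}}(1)$, which by the strong law converges in probability to $G:=\mathbb{E}\bigl[\frac{\partial}{\partial\theta}\mathcal{A}_\theta f_{a^\star}^{b^\star}(X)\big|_{\theta^\star}\bigr]=\mathbb{E}[M_{a^\star}^{b^\star}(X)]\frac{\partial}{\partial\theta}g(\theta)\big|_{\theta^\star}$, using that $M_a^b$ does not depend on $\theta$; $G$ is invertible by Assumption \ref{assumptions_unknown_trunc}(b). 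Thus $\sqrt n(\hat\theta_n-\theta^\star)=-G_n^{-1}\sqrt n\,\Psi_n(\theta^\star,\hat a_n,\hat b_n)$.

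The crux is the $\theta^\star$-centred score $\sqrt n\,\Psi_n(\theta^\star,\hat a_n,\hat b_n)$, and the point is that swapping $(\hat a_n,\hat b_n)$ for $(a^\star,b^\star)$ is free at this scale. On $E_n$ the two maps coincide, so $\sqrt n[\Psi_n(\theta^\star,\hat a_n,\hat b_n)-\Psi_n(\theta^\star,a^\star,b^\star)]$ is identically $0$ on an event of probability $\to1$, hence $o_{\mathbb{P}}(1)$; the more robust route, which is where the discrete Taylor formulas \eqref{discrete_taylor_lower}--\eqref{discrete_taylor_upper} enter, is to expand $f_{a'}^{b'}$ against $f_{a^\star}^{b^\star}$ in finite differences so as to bound $\bigl|\mathcal{A}_\theta f_{a'}^{b'}(k)-\mathcal{A}_\theta f_{a^\star}^{b^\star}(k)\bigr|$ on $\{a^\star,\dots,b^\star\}$ by $C'(|a'-a^\star|+|b'-b^\star|)$, whence $\sqrt n|\Psi_n(\theta^\star,\hat a_n,\hat b_n)-\Psi_n(\theta^\star,a^\star,b^\star)|\le C'\sqrt n(|\hat a_n-a^\star|+|\hat b_n-b^\star|)\to0$ almost surely by Lemma \ref{lemma_conv_maxmin} applied with $q_n=\sqrt n$ --- which is precisely why that lemma is stated for an arbitrary sequence $q_n$. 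It then remains to note that $\sqrt n\,\Psi_n(\theta^\star,a^\star,b^\star)=\tfrac1{\sqrt n}\sum_i\mathcal{A}_{\theta^\star}f_{a^\star}^{b^\star}(X_i)$ is a normalized sum of i.i.d.\ terms, mean-zero by Theorem \ref{theorem_char_stein_uni}, with covariance $\mathbb{E}[(\mathcal{A}_{\theta^\star}f_{a^\star}^{b^\star}(X))^2]$ finite by the square-integrability in Assumption \ref{assumptions_unknown_trunc}(a), so the classical CLT gives $\sqrt n\,\Psi_n(\theta^\star,a^\star,b^\star)\xrightarrow{D}N(0,\mathbb{E}[(\mathcal{A}_{\theta^\star}f_{a^\star}^{b^\star}(X))^2])$; combining with $G_n^{-1}\xrightarrow{\mathbb{P}}G^{-1}$ via Slutsky's lemma yields $\sqrt n(\hat\theta_n-\theta^\star)\xrightarrow{D}N(0,G^{-1}\mathbb{E}[(\mathcal{A}_{\theta^\star}f_{a^\star}^{b^\star}(X))^2]G^{-\top})=N(0,\Sigma)$.

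The one genuine obstacle --- and in fact the whole content of the theorem --- is the negligibility step $\sqrt n[\Psi_n(\theta^\star,\hat a_n,\hat b_n)-\Psi_n(\theta^\star,a^\star,b^\star)]=o_{\mathbb{P}}(1)$; everything else is the routine M-estimation expansion. What makes this step succeed, in contrast to a generic plug-in problem where a nuisance estimated at rate $n^{-1/2}$ would perturb the asymptotic variance, is that the truncation endpoints live on a lattice and are estimated super-efficiently: they are recovered exactly for all large $n$ almost surely, so no $O(n^{-1/2})$ fluctuation is ever propagated into $\hat\theta_n$.
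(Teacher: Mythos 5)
Your proposal is correct, and for the decisive step it takes a genuinely different, more elementary route than the paper. The paper runs the full machinery: after the multivariate Taylor expansion in $\theta$ (which you reproduce essentially verbatim, including the handling of the second-order remainder via the uniform Hessian bound), it expands $\mathcal{A}_{\theta^{\star}}f_{\hat{a}_n}^{\hat{b}_n}$ around $\mathcal{A}_{\theta^{\star}}f_{a^{\star}}^{b^{\star}}$ using the discrete Taylor formulas \eqref{discrete_taylor_lower}--\eqref{discrete_taylor_upper} with $n=2$, and kills each correction term by pairing Lemma \ref{lemma_conv_maxmin} (with a suitably fast $q_n$) against the $O_{\mathbb{P}}(\sqrt{n})$ size of the accompanying sums. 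Your primary argument instead observes that $\mathbb{P}(\hat{a}_n=a^{\star},\hat{b}_n=b^{\star})\rightarrow 1$, so that with probability tending to one the estimator coincides exactly with the oracle estimator built from $f_{a^{\star}}^{b^{\star}}$, and two sequences that agree with probability tending to one share the same limit law by Slutsky; your secondary ``robust route'' via a crude Lipschitz bound in $(a',b')$ over the finite index set is essentially the paper's argument stripped of the explicit Taylor remainder formulas. Both proofs rest on the same probabilistic fact --- super-efficient exact recovery of the lattice endpoints, which is also how Lemma \ref{lemma_conv_maxmin} is proved --- so neither is more general here; what the paper's version buys is a template that would survive in settings where the nuisance is only recovered at a fast rate rather than exactly, while your version is shorter, makes the real mechanism transparent, and correctly identifies that the discrete Taylor apparatus is not strictly needed once finiteness of $a^{\star},b^{\star}$ is in force.
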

\begin{proof}
A multivariate Taylor expansion gives
\begin{align*}
    0=\frac{1}{n} \sum_{i=1}^n \mathcal{A}_{\hat{\theta}_n}f_{\hat{a}_n}^{\hat{b}_n} (X_i) =& \frac{1}{n} \sum_{i=1}^n \mathcal{A}_{\theta^{\star}}f_{\hat{a}_n}^{\hat{b}_n} (X_i) + \bigg( \frac{1}{n} \sum_{i=1}^n \frac{\partial}{\partial \theta} \mathcal{A}_{\theta}f_{\hat{a}_n}^{\hat{b}_n} (X_i) \Big\vert_{\theta=\theta^{\star}} \bigg) (\hat{\theta}_n - \theta^{\star} ) \\
    & + A_n(\hat{\theta}_n,\hat{a}_n,\hat{b}_n) (\hat{\theta}_n - \theta^{\star} ),
\end{align*}
where 
\begin{align*}
    [A_n(\hat{\theta}_n,\hat{a}_n,\hat{b}_n)]_{k,l}= \sum_{j=1}^n \int_0^1 (1-t)  \frac{1}{n} \sum_{i=1}^n \frac{\partial^2}{\partial \theta_j \partial \theta_l} \big[ \mathcal{A}_{\theta}f_{\hat{a}_n}^{\hat{b}_n} (X_i) \big]_{k} \Big\vert_{\theta=\theta^{\star}+t(\hat{\theta}_n - \theta^{\star})}  dt \big[\hat{\theta}_n - \theta^{\star} \big]_{j},
\end{align*}
where $[\cdot]_i$ denotes the $i$th component of a vector or matrix. Rearranging yields
\begin{align*}
    \sqrt{n} (\hat{\theta}_n - \theta^{\star} ) = - \bigg( \frac{1}{n} \sum_{i=1}^n \frac{\partial}{\partial \theta} \mathcal{A}_{\theta}f_{\hat{a}_n}^{\hat{b}_n} (X_i) \Big\vert_{\theta=\theta^{\star}} + A_n(\hat{\theta}_n,\hat{a}_n,\hat{b}_n) \bigg)^{-1} \frac{1}{\sqrt{n}} \sum_{i=1}^n \mathcal{A}_{\theta^{\star}}f_{\hat{a}_n}^{\hat{b}_n} (X_i).
\end{align*}
Consider the last term above, we apply the formula \eqref{discrete_taylor_lower} for $n=2$, $\tilde{k}=a^{\star}$, $k=\hat{a}_n$ and get
\begin{align} \label{equation_proof_asym_norm}
\begin{split}
    \frac{1}{\sqrt{n}} \sum_{i=1}^n \mathcal{A}_{\theta^{\star}}f_{\hat{a}_n}^{\hat{b}_n} (X_i) =& \frac{1}{\sqrt{n}} \sum_{i=1}^n \mathcal{A}_{\theta^{\star}}f_{a^{\star}}^{\hat{b}_n} (X_i) + (\hat{a}_n -a^{\star} ) \bigg( \frac{1}{\sqrt{n}} \sum_{i=1}^n \mathcal{A}_{\theta^{\star}}f_{a^{\star}+1}^{\hat{b}_n} (X_i) - \mathcal{A}_{\theta^{\star}}f_{a^{\star}}^{\hat{b}_n} (X_i) \bigg) \\
    & + \sum_{j=a^{\star}}^{\hat{a}_n-2} (\hat{a}_n-j-1) \bigg(\frac{1}{\sqrt{n}} \sum_{i=1}^n \mathcal{A}_{\theta^{\star}}f_{j+2}^{\hat{b}_n} (X_i) - 2\mathcal{A}_{\theta^{\star}}f_{j+1}^{\hat{b}_n} (X_i) +\mathcal{A}_{\theta^{\star}}f_{j}^{\hat{b}_n} (X_i)  \bigg).
\end{split}
\end{align}
We apply formula \eqref{discrete_taylor_upper} to the first term above and have with $n=2$, $\tilde{k}=b^{\star}$, $k=\hat{b}_n$ that
\begin{align*}
    \frac{1}{\sqrt{n}} \sum_{i=1}^n \mathcal{A}_{\theta^{\star}}f_{a^{\star}}^{\hat{b}_n} (X_i) =& \frac{1}{\sqrt{n}} \sum_{i=1}^n \mathcal{A}_{\theta^{\star}}f_{a^{\star}}^{b^{\star}} (X_i) - (b^{\star} - \hat{b}_n ) \bigg( \frac{1}{\sqrt{n}} \sum_{i=1}^n \mathcal{A}_{\theta^{\star}}f_{a^{\star}}^{b^{\star}} (X_i) - \mathcal{A}_{\theta^{\star}}f_{a^{\star}}^{b^{\star}-1} (X_i) \bigg) \\
    & + \sum_{j=\hat{b}_n+2}^{b^{\star}} (j-\hat{b}_n-1) \bigg(\frac{1}{\sqrt{n}} \sum_{i=1}^n \mathcal{A}_{\theta^{\star}}f_{a^{\star}}^{j} (X_i) - 2\mathcal{A}_{\theta^{\star}}f_{a^{\star}}^{j-1} (X_i) +\mathcal{A}_{\theta^{\star}}f_{a^{\star}}^{j-2} (X_i)  \bigg).
\end{align*}
Now, using Lemma \ref{lemma_conv_maxmin} together with Assumption \ref{assumptions_unknown_trunc} and the central limit theorem, it follows that
\begin{align*}
    \frac{1}{\sqrt{n}} \sum_{i=1}^n \mathcal{A}_{\theta^{\star}}f_{a^{\star}}^{\hat{b}_n} (X_i) = \frac{1}{\sqrt{n}} \sum_{i=1}^n \mathcal{A}_{\theta^{\star}}f_{a^{\star}}^{b^{\star}} (X_i) + o_{\mathbb{P}}(1),
\end{align*}
where $o_{\mathbb{P}}(1)$ denotes a term that converges to $0$ in probability. The same arguments can be used for all other terms in \eqref{equation_proof_asym_norm} to obtain
\begin{align*}
    \frac{1}{\sqrt{n}} \sum_{i=1}^n \mathcal{A}_{\theta^{\star}}f_{\hat{a}_n}^{\hat{b}_n} (X_i) = \frac{1}{\sqrt{n}} \sum_{i=1}^n \mathcal{A}_{\theta^{\star}}f_{a^{\star}}^{b^{\star}} (X_i) + o_{\mathbb{P}}(1).
\end{align*}
Likewise, we have
\begin{align*}
    \frac{1}{n} \sum_{i=1}^n \frac{\partial}{\partial \theta} \mathcal{A}_{\theta}f_{\hat{a}_n}^{\hat{b}_n} (X_i) \Big\vert_{\theta=\theta^{\star}} = \frac{1}{n} \sum_{i=1}^n \frac{\partial}{\partial \theta} \mathcal{A}_{\theta}f_{a^{\star}}^{b^{\star}} (X_i) \Big\vert_{\theta=\theta^{\star}} + o_{\mathbb{P}}(1).
\end{align*}
With Theorem \ref{theorem_consistency_unknown_trunc} and the assumptions made in the statement of the theorem it follows that
\begin{align*}
    \sqrt{n} (\hat{\theta}_n - \theta^{\star} ) = - \bigg( \frac{1}{n} \sum_{i=1}^n \frac{\partial}{\partial \theta} \mathcal{A}_{\theta}f_{a^{\star}}^{b^{\star}} (X_i) \Big\vert_{\theta=\theta^{\star}} +  o_{\mathbb{P}}(1) \bigg)^{-1} \frac{1}{\sqrt{n}} \sum_{i=1}^n \mathcal{A}_{\theta^{\star}}f_{a^{\star}}^{b^{\star}} (X_i) +  o_{\mathbb{P}}(1).
\end{align*}
and the claim now follows with Slutsky's lemma, the central limit theorem and the strong law of large numbers.
\end{proof}

Let us reconsider Example \ref{example_truncated_poisson} and \ref{example_truncated_multneg} with an unknown truncation domain. Simulation results for the Stein estimator and the MLE can be found in Tables \ref{truncpoissonunknowndom_sim} and \ref{truncnegmultunknowndom_sim}. For the $TNM(r,p,a,b)$-distribution we estimated $a=(a_1,\ldots,a_d)$ and $b=(b_1,\ldots,b_d)$ component-wise, i.e. for a sample $X_1,\ldots,X_n$, where $X_i=(X_i^{(1)},\ldots,X_i^{(d)})^{\top}$ we have
\begin{gather*}
    \hat{a}_n = \big(\min\{X_1^{(1)},\ldots, X_n^{(1)} \},\ldots, \min\{X_1^{(d)},\ldots, X_n^{(d)} \} \big), \\
    \hat{b}_n = \big(\max\{X_1^{(1)},\ldots, X_n^{(1)} \},\ldots, \max\{X_1^{(d)},\ldots, X_n^{(d)} \} \big).
\end{gather*}
For both distributions we observe that bias and MSE are slightly larger due to the additional uncertainty from the unknown truncation domain, although the differences are only minor. Interestingly, numerical optimization for the MLE appears to be more stable for some parameter constellation for $TNM(r,p,a,b)$ in the  case of an unknown truncation domain.

\begin{table} 
\centering
\begin{tabular}{cc|cc|cc|cc}
 $(\lambda^{\star},a^{\star},b^{\star})$ & & \multicolumn{2}{c|}{Bias} & \multicolumn{2}{c|}{MSE}  & \multicolumn{2}{c}{NE} \\ \hline
   & & $\hat{\theta}_n^{\mathrm{ML}}$ & $\hat{\theta}_n^{\mathrm{ST}}$ & $\hat{\theta}_n^{\mathrm{ML}}$ & $\hat{\theta}_n^{\mathrm{ST}}$ & $\hat{\theta}_n^{\mathrm{ML}}$ & $\hat{\theta}_n^{\mathrm{ST}}$ \\ \hline
\multirow{1}{*}{$(0.1,2,10)$} & $\lambda$  & $0.194$ & $0.029$ & $0.157$ & 6.28\text{e-3} & \multirow{1}{*}{0} & \multirow{1}{*}{19} \\  \hline 
\multirow{1}{*}{$(0.5,0,30)$} & $\lambda$  & $0.026$ & $0.029$ & $0.012$ & 0.013 & \multirow{1}{*}{0} & \multirow{1}{*}{0} \\  \hline 
\multirow{1}{*}{$(0.9,6,\infty)$} & $\lambda$  & $0.215$ & $3.58\text{e-3}$ & $0.09$ & 0.111 & \multirow{1}{*}{43} & \multirow{1}{*}{0} \\  \hline 
\multirow{1}{*}{$(1,0,80)$} & $\lambda$  & $0.038$ & $0.046$ & $0.024$ & 0.025 & \multirow{1}{*}{0} & \multirow{1}{*}{0} \\  \hline 
\multirow{1}{*}{$(1.5,6,85)$} & $\lambda$  & $0.118$ & $0.082$ & $0.195$ & 0.204 & \multirow{1}{*}{0} & \multirow{1}{*}{0} \\  \hline 
\multirow{1}{*}{$(2,2,40)$} & $\lambda$  & $0.075$ & $0.078$ & $0.083$ & 0.092 & \multirow{1}{*}{0} & \multirow{1}{*}{0} \\  \hline 
\multirow{1}{*}{$(2.5,6,90)$} & $\lambda$  & $0.146$ & $0.112$ & $0.264$ & 0.294 & \multirow{1}{*}{0} & \multirow{1}{*}{0} \\  \hline 
\multirow{1}{*}{$(3,0,50)$} & $\lambda$  & $0.057$ & $0.091$ & $0.068$ & 0.076 & \multirow{1}{*}{0} & \multirow{1}{*}{0} \\  \hline 
\multirow{1}{*}{$(3.5,1,10)$} & $\lambda$  & $0.074$ & $0.11$ & $0.087$ & 0.099 & \multirow{1}{*}{0} & \multirow{1}{*}{0} \\  \hline 
\multirow{1}{*}{$(4,0,20)$} & $\lambda$  & $0.049$ & $0.096$ & $0.091$ & 0.102 & \multirow{1}{*}{0} & \multirow{1}{*}{0} \\  \hline 
\end{tabular} 
\caption{\protect\label{truncpoissonunknowndom_sim} Simulation results for the $TP(\lambda,a,b)$ distribution with unknown truncation domain for $n=50$ and $10{,}000$ repetitions.}
\end{table}

\begin{table} 
\centering
\begin{tabular}{cc|cc|cc|cc}
 $(r,p^{\star},a^{\star},b^{\star})$ & & \multicolumn{2}{c|}{Bias} & \multicolumn{2}{c|}{MSE} & \multicolumn{2}{c}{NE} \\ \hline
& & $\hat{\theta}_n^{\mathrm{ML}}$ & $\hat{\theta}_n^{\mathrm{ST}}$ & $\hat{\theta}_n^{\mathrm{ML}}$ & $\hat{\theta}_n^{\mathrm{ST}}$ & $\hat{\theta}_n^{\mathrm{ML}}$ & $\hat{\theta}_n^{\mathrm{ST}}$ \\ \hline
\multirowcell{3}{$(5,(0.3,0.1,0.2) $, \\ $(0,0,0),(10,10,10))$} & $p_1$  & -- & 2.25\text{e-4} & -- & 7.8\text{e-4} & \multirow{3}{*}{100} & \multirow{3}{*}{0} \\ & $p_2$  & -- & 1.07\text{e-4} & -- & 1.52\text{e-4}\\ & $p_{3}$ & -- & 1.09\text{e-4} & -- & 4.04\text{e-4}\\ \hline 
\multirowcell{3}{$(5,(0.1,0.1,0.1) $, \\ $(1,1,2),(5,5,5))$} & $p_1$  & 2\text{e-3} & 3.77\text{e-3} & 1.42\text{e-4} & 2.13\text{e-3} & \multirow{3}{*}{6} & \multirow{3}{*}{1} \\ & $p_2$  & 2\text{e-3} & 3.17\text{e-3} & 1.45\text{e-4} & 2.15\text{e-3}\\ & $p_{3}$ & 1.6\text{e-3} & 4.76\text{e-3} & 2.18\text{e-4} & 2.4\text{e-3}\\ \hline 
\multirowcell{3}{$(7,(0.2,0.1,0.1) $, \\ $(0,0,0),(5,5,5))$} & $p_1$  & -2.2\text{e-3} & 2.28\text{e-3} & 2.13\text{e-4} & 1.31\text{e-3} & \multirow{3}{*}{89} & \multirow{3}{*}{0} \\ & $p_2$  & 1.98\text{e-3} & 4.57\text{e-4} & 8.7\text{e-5} & 2.82\text{e-4}\\ & $p_{3}$ & 1.93\text{e-3} & 4.07\text{e-4} & 9.04\text{e-5} & 2.92\text{e-4}\\ \hline 
\multirowcell{3}{$(2,(0.8,0.1,0.05) $, \\ $(0,0,2),(7,8,5))$} & $p_1$  & -8.14\text{e-3} & -0.153 & 2.46\text{e-3} & 0.058 & \multirow{3}{*}{63} & \multirow{3}{*}{40} \\ & $p_2$  & 3.98\text{e-3} & 3.08\text{e-3} & 9.91\text{e-5} & 1.82\text{e-3}\\ & $p_{3}$ & 4.09\text{e-3} & 1.7\text{e-3} & 1.45\text{e-4} & 5.31\text{e-4}\\ \hline 
\multirowcell{3}{$(5,(0.5,0.1,0.1) $, \\ $(2,1,1),(5,5,5))$} & $p_1$  & -1.19\text{e-4} & 8.35\text{e-3} & 2.36\text{e-3} & 0.032 & \multirow{3}{*}{28} & \multirow{3}{*}{9} \\ & $p_2$  & 1.29\text{e-3} & 1.29\text{e-3} & 1.31\text{e-4} & 8.38\text{e-4}\\ & $p_{3}$ & 1.44\text{e-3} & 8.69\text{e-4} & 1.32\text{e-4} & 8.18\text{e-4}\\ \hline 
\multirowcell{3}{$(3,(0.3,0.3,0.3) $, \\ $(0,0,0),(10,5,5))$} & $p_1$  & 2.59\text{e-4} & -3.55\text{e-4} & 1.51\text{e-7} & 6.66\text{e-4} & \multirow{3}{*}{100} & \multirow{3}{*}{0} \\ & $p_2$  & -0.012 & 4.32\text{e-3} & 2.87\text{e-4} & 2.4\text{e-3}\\ & $p_{3}$ & -0.017 & 4.06\text{e-3} & 5.68\text{e-4} & 2.37\text{e-3}\\ \hline 
\multirowcell{3}{$(5,(0.2,0.3,0.4) $, \\ $(1,1,1),(9,9,9))$} & $p_1$  & -- & 4.19\text{e-4} & -- & 2.74\text{e-4} & \multirow{3}{*}{100} & \multirow{3}{*}{0} \\ & $p_2$  & -- & 8.34\text{e-4} & -- & 9.06\text{e-4}\\ & $p_{3}$ & -- & 3.11\text{e-3} & -- & 2.23\text{e-3}\\ \hline 
\multirowcell{3}{$(1,(0.1,0.1,0.4) $, \\ $(0,0,0),(10,10,7))$} & $p_1$  & 4.75\text{e-3} & 0.015 & 4.64\text{e-4} & 7.71\text{e-3} & \multirow{3}{*}{15} & \multirow{3}{*}{13} \\ & $p_2$  & 5.02\text{e-3} & 0.016 & 4.64\text{e-4} & 7.53\text{e-3}\\ & $p_{3}$ & 7.2\text{e-3} & -0.032 & 1.2\text{e-3} & 0.031\\ \hline 
\multirowcell{3}{$(1,(0.4,0.5,0.05) $, \\ $(2,3,2),(10,10,10))$} & $p_1$  & 4.16\text{e-3} & 5.66\text{e-3} & 3.14\text{e-4} & 6.25\text{e-3} & \multirow{3}{*}{99} & \multirow{3}{*}{1} \\ & $p_2$  & -0.011 & 0.012 & 6.74\text{e-4} & 0.013\\ & $p_{3}$ & 5.46\text{e-3} & 7.26\text{e-4} & 1.61\text{e-4} & 1.82\text{e-4}\\ \hline 
\multirowcell{4}{$(1,(0.4,0.05,0.2,0.01) $, \\ $(1,1,0,0),(10,11,11,12))$} & $p_1$  & -0.017 & -0.056 & 2.89\text{e-4} & 0.014 & \multirow{4}{*}{100} & \multirow{4}{*}{93} \\ & $p_2$  & 0.032 & 0.087 & 1.03\text{e-3} & 0.022\\ & $p_{3}$ & -0.02 & -3.51\text{e-4} & 3.97\text{e-4} & 0.016\\ & $p_{4}$ & 0.228 & 0.018 & 0.052 & 1.32\text{e-3}\\ \hline 
\end{tabular} 
\caption{\protect\label{truncnegmultunknowndom_sim} Simulation results for the $TNM(r,p,a,b)$ distribution with unknown truncation domain for $n=100$ and $10{,}000$ repetitions.}
\end{table}

\section*{Acknowledgements}
AF is funded by EPSRC Grant EP/T018445/1. 

\bibliography{library}

\begin{thebibliography}{10}

\bibitem{agarwal2000difference}
R.~P. Agarwal.
\newblock {\em Difference equations and inequalities: theory, methods, and
  applications}.
\newblock CRC Press, 2000.

\bibitem{anastasiou2023stein}
A.~Anastasiou, A.~Barp, F.-X. Briol, B.~Ebner, R.~E. Gaunt, F.~Ghaderinezhad,
  J.~Gorham, A.~Gretton, C.~Ley, Q.~Liu, et~al.
\newblock Stein’s method meets computational statistics: {A} review of some
  recent developments.
\newblock {\em Statistical Science}, 38(1):120--139, 2023.

\bibitem{betsch2022characterizations}
S.~Betsch, B.~Ebner, and F.~Nestmann.
\newblock Characterizations of non-normalized discrete probability
  distributions and their application in statistics.
\newblock {\em Electronic Journal of Statistics}, 16(1):1303--1329, 2022.

\bibitem{ebner2025stein}
B.~Ebner, A.~Fischer, R.~E. Gaunt, B.~Picker, and Y.~Swan.
\newblock Stein's method of moments.
\newblock {\em To appear in: Scandinavian Journal of Statistics}, 2025+.

\bibitem{fischer2025steinsphere}
A.~Fischer, R.~E. Gaunt, and Y.~Swan.
\newblock Stein's {M}ethod of {M}oments on the {S}phere.
\newblock {\em To appear in: Bernoulli}, 2025+.

\bibitem{fischer2025stein}
A.~Fischer, R.~E. Gaunt, and Y.~Swan.
\newblock Stein’s method of moments for truncated multivariate distributions.
\newblock {\em Electronic Journal of Statistics}, 19(1):1784--1808, 2025.

\bibitem{fischer2025steinexp}
A.~Fischer, G.~Reinert, and W.~Xu.
\newblock Stein's method of moment estimators for local dependency exponential
  random graph models.
\newblock {\em arXiv preprint arXiv:2503.13191}, 2025.

\bibitem{hyvarinen2007some}
A.~Hyv{\"a}rinen.
\newblock Some extensions of score matching.
\newblock {\em Computational statistics \& data analysis}, 51(5):2499--2512,
  2007.

\bibitem{hyvarinen2005estimation}
A.~Hyv{\"a}rinen and P.~Dayan.
\newblock Estimation of non-normalized statistical models by score matching.
\newblock {\em Journal of Machine Learning Research}, 6:695--709, 2005.

\bibitem{ley2013local}
C.~Ley and Y.~Swan.
\newblock Local pinsker inequalities via {S}tein's discrete density approach.
\newblock {\em IEEE Transactions on Information Theory}, 59(9):5584--5591,
  2013.

\bibitem{ley2013stein}
C.~Ley and Y.~Swan.
\newblock Stein’s density approach and information inequalities.
\newblock {\em Electronic communications in probability}, 18:1--14, 2013.

\bibitem{lyu2009interpretation}
S.~Lyu.
\newblock Interpretation and generalization of score matching.
\newblock In {\em Proceedings of the Twenty-Fifth Conference on Uncertainty in
  Artificial Intelligence}, pages 359--366, 2009.

\bibitem{nik2024generalized}
S.~Nik and C.~H. Wei{\ss}.
\newblock Generalized {M}oment {E}stimators {B}ased on {S}tein {I}dentities.
\newblock {\em Journal of Statistical Theory and Applications}, 23(3):240--274,
  2024.

\bibitem{R}
{R Core Team}.
\newblock {\em R: A Language and Environment for Statistical Computing}.
\newblock R Foundation for Statistical Computing, Vienna, Austria, 2021.

\bibitem{rodrigues2025closed}
K.~A.~S. Rodrigues.
\newblock Closed-form maximum likelihood estimator for logarithmic distribution
  and its asymptotic variance.
\newblock {\em Statistica Neerlandica}, 79(2):e70008, 2025.

\bibitem{stein1972bound}
C.~Stein.
\newblock A bound for the error in the normal approximation to the distribution
  of a sum of dependent random variables.
\newblock In {\em Proceedings of the sixth Berkeley symposium on mathematical
  statistics and probability, volume 2: Probability theory}, volume~6, pages
  583--603. University of California Press, 1972.

\bibitem{stein1986approximate}
C.~Stein.
\newblock {\em Approximate computation of expectations}, volume~7 of {\em
  Institute of Mathematical Statistics Lecture Notes---Monograph Series}.
\newblock Institute of Mathematical Statistics, Hayward, CA, 1986.

\bibitem{stein2004use}
C.~Stein, P.~Diaconis, S.~Holmes, and G.~Reinert.
\newblock Use of exchangeable pairs in the analysis of simulations.
\newblock {\em Lecture Notes-Monograph Series}, pages 1--26, 2004.

\bibitem{xu2025generalized}
J.~Xu, J.~L. Scealy, A.~T. Wood, and T.~Zou.
\newblock Generalized score matching.
\newblock {\em Journal of Multivariate Analysis}, 210:105473, 2025.

\end{thebibliography}
\bibliographystyle{abbrv}

\end{document}